  \theoremstyle{plain}
    \newtheorem{theorem}{Theorem}[section]
    \newtheorem{proposition}[theorem]{Proposition}
   \newtheorem{lemma}[theorem]{Lemma}
    \newtheorem{subsec}[theorem]{}
\theoremstyle{definition}
    \newtheorem{definition}[theorem]{Definition}
        \newtheorem{remark}[theorem]{Remark}
\theoremstyle{remark}
\title{}
\author{}
\date{}
\begin{document}
\title{A cohomological study of modified Rota-Baxter algebras}

\author{Apurba Das}
\address{Department of Mathematics,
Indian Institute of Technology Kharagpur, Kharagpur-721302, West Bengal, India.}
\email{apurbadas348@gmail.com, apurbadas348@maths.iitkgp.ac.in}



%


%


\subjclass[2010]{16E40, 16D20, 16Wxx, 16S80.}
\keywords{Modified Rota-Baxter operators, Cohomology, Deformations, Abelian extensions}

\begin{abstract}
A modified Rota-Baxter algebra is an algebra equipped with an operator that satisfies the modified Yang-Baxter equation. In this paper, we define the cohomology of a modified Rota-Baxter algebra with coefficients in a suitable bimodule. We relate our cohomology of a modified Rota-Baxter algebra with the known cohomology theory of a Rota-Baxter algebra. As applications of our cohomology, we study formal one-parameter deformations and abelian extensions of modified Rota-Baxter algebras. \\
\end{abstract}


\maketitle



\noindent

\thispagestyle{empty}

\tableofcontents

\vspace{0.2cm}

\section{Introduction}
Rota-Baxter operators pay very much attention in the last twenty years as they have many applications in mathematics and physics. Rota-Baxter operators originated in a work of Baxter in the fluctuation theory of probability \cite{baxter}. Subsequently, many well-known mathematicsians, such as Atkinson, Cartier and Rota studied Rota-Baxter operators from combinatorial points of view \cite{atkinson,cartier,rota}. See \cite{aguiar,aguiar-pre-lie,bai,connes,guo-book,guo-keigher} for the appearence of Rota-Baxter operators in combinatorics, splitting of algebras, infinitesimal bialgebras and renormalizations in quantum field theory. Let $A$ be an (associative) algebra over a field ${\bf k}$ of characteristic $0$. A linear map $P: A \rightarrow A$ is said to be a {\bf Rota-Baxter operator} of weight $\lambda \in {\bf k}$ if the map $P$ satisfies
\begin{align*}
P(a) \cdot  P(b) = P \big( P(a) \cdot  b + a \cdot  P(b) \big) + \lambda ~ \! P (a \cdot  b), \text{ for all } a, b \in A.
\end{align*}
Here $~\cdot ~$ denotes the multiplication map in $A$. A pair $(A, P)$ consisting of an algebra $A$ and a Rota-Baxter operator $P$ of weight $\lambda$ is said to be a {\bf Rota-Baxter algebra} of weight $\lambda$. Recall that \cite{GuoLin} a bimodule over a Rota-Baxter algebra $(A, P)$ of weight $\lambda$ consists of a pair $(M, Q)$ in which $M$ is an $A$-bimodule and $Q: M \rightarrow M$ is a linear map satisfying 
\begin{align*}
P(a) \cdot  Q(u) =~& Q \big( P(a) \cdot  u + a \cdot  Q(u) \big) + \lambda ~ \! Q (a \cdot  u),\\
Q(u) \cdot  P(a) =~& Q \big( Q(u) \cdot  a + u \cdot  P(a) \big) + \lambda ~ \! Q (u \cdot a),
\end{align*}
for all $a \in A$ and $u \in M$. Here $~ \cdot ~$ also denotes the left and right $A$-actions on $M$. See also \cite{wang-zhou} for details.

\medskip

Rota-Baxter operators can also be defined on other algebraic structures, such as Lie algebras, Poisson algebras etc. In \cite{semenov} Semenov-Tyan-Shanski\u{i} observed that under certain conditions, a Rota-Baxter operator of weight $0$ on a Lie algebra is simply the operator form of the classical Yang-Baxter equation (CYBE). In the same paper, the author also introduced a closely related notion of modified classical Yang-Baxter equation (modified CYBE) whose solutions are called modified $r$-matrices. The modified CYBE has found important applications in the study of Lax equations, affine geometry on Lie groups, factorization problems in Lie algebras and compatible Poisson structures \cite{bai-lax,bor,li,sza}. The modified CYBE can be obtained from the CYBE by a suitable transformation, however they plays independent role in mathematical physics. This led various authors to consider them independently. Motivated by such study, the authors in \cite{zhang,zhang2,zhang3} considered the associative analogue of the modified CYBE, called the modified associative Yang-Baxter equation of weight $\kappa \in {\bf k}$ (in short modified AYBE of weight $\kappa$). A solution of the modified AYBE of weight $\kappa$ is called a modified Rota-Baxter operator of weight $\kappa$. Thus, modified Rota-Baxter operators are the associative analogue of modified $r$-matrices. Given an associative algebra $A$, a linear map $R: A \rightarrow A$ is a {\bf modified Rota-Baxter operator} of weight $\kappa \in {\bf k}$ if
\begin{align*}
R(a) \cdot  R(b) = R \big( R(a) \cdot  b + a \cdot  R(b) \big) + \kappa ~ \! a \cdot  b, ~ \text{ for all } a, b \in A.
\end{align*}
An associative algebra equipped with a modified Rota-Baxter operator of weight $\kappa$ is called a modified Rota-Baxter algebra of weight $\kappa$. It has been observed \cite{zhang,zhang2} that if $(A, P)$ is a Rota-Baxter algebra of weight $\lambda$, then $(A, R = \lambda ~ \! \mathrm{id} + 2P)$ is a modified Rota-Baxter algebra of weight $\kappa = - \lambda^2$.

\medskip

Algebraic structures are better understood by their invariants, such as homology, cohomology and $K$-groups. All these invariants have their own significance. The cohomology theory of an algebraic structure govern formal deformations of the structure and classify its abelian extensions. For example, the classical Hochschild cohomology theory is the cohomology for associative algebras \cite{hoch}. To know the better description of the cohomology, one need to understand representations of the given algebraic structure. In this paper, we first consider bimodules over modified Rota-Baxter algebras and construct the corresponding semidirect product. A bimodule over a modified Rota-Baxter algebra $(A,R)$ consists of a pair $(M,S)$ in which $M$ is an $A$-bimodule and $S: M \rightarrow M$ is a linear map satisfying some compatibilities. It follows that any modified Rota-Baxter algebra $(A, R)$ is a bimodule over itself. Given a modified Rota-Baxter algebra $(A,R)$, we show that the underlying space $A$ carries a new associative algebra structure with the product $*_R$ given by
\begin{align*}
a *_R b = R(a) \cdot  b + a \cdot  R(b), ~ \text{ for } a, b \in A.
\end{align*}
We denote this associative algebra by $A_R$. Further, if $(M,S)$ is a bimodule over the modified Rota-Baxter algebra $(A,R)$, then there is an $A_R$-bimodule structure on $M$ (denoted by $\widetilde{M}$). Thus, we may define the Hochschild cochain complex of the algebra $A_R$ with coefficients in the $A_R$-bimodule $\widetilde{M}$. On the other hand, we can consider the Hochschild cochain complex of the given algebra $A$ with coefficients in the $A$-bimodule $M$. As a biproduct of the above two cochain complexes, we construct a new complex defining the cohomology of the modified Rota-Baxter algebra $(A, R)$ with coefficients in the bimodule $(M,S)$. We compare our cohomology of modified Rota-Baxter algebras with the cohomology of Rota-Baxter algebras recently introduced in \cite{DasSK,wang-zhou}. More precisely, we show that the cohomology of a Rota-Baxter algebra $(A,P)$ is isomorphic to the cohomology of the modified Rota-Baxter algebra $(A, R= \lambda ~ \! \mathrm{id} + 2P)$.

\medskip

The formal deformation theory of some algebraic structure began with the seminal work of Gerstenhaber for associative algebras \cite{gers,gers-ring}. Subsequently, the theory has been generalized by Nijenhuis and Richardson for Lie algebras \cite{nij-ric}. See \cite{bala,doubek,balav,gers-sch} for more details about deformations of various algebras and morphisms between them. Recently, various authors have considered formal deformations of Rota-Baxter operators, Rota-Baxter algebras and modified $r$-matrices motivated by the deformations of classical $r$-matrices \cite{das-rota,DasSK,jiang-sheng,laza-rota,tang,wang-zhou,modified-sheng}. In this paper, we consider the formal deformation theory of a modified Rota-Baxter algebra $(A,R)$. In this theory, we simultaneously deform the algebra $A$ and the modified Rota-Baxter operator $R$. We show that the infinitesimal in a formal deformation of $(A,R)$ is a $2$-cocycle in the cohomology complex of $(A,R)$ with coefficients in itself. Moreover, the infinitesimals corresponding to equivalent deformations are cohomologous, hence they corresponds to the same element in the second cohomology group. We also discuss the rigidity of a modified Rota-Baxter algebra $(A,R)$ and show that the vanishing of the second cohomology group is a sufficient condition for the rigidity of $(A,R)$.

\medskip

In the next, we define and study abelian extensions of a modified Rota-Baxter algebra $(A, R)$ by a bimodule $(M,S)$ over it. We show that the isomorphism classes of such abelian extensions can be classified by the second cohomology of the modified Rota-Baxter algebra $(A,R)$ with coefficients in the bimodule $(M,S).$

\medskip

The paper is organized as follows. In Section \ref{sec-2}, we recall some basic definitions about modified Rota-Baxter algebras, and define bimodules over them. The cohomology of a modified Rota-Baxter algebra with coefficients in a bimodule is defined in Section \ref{sec-3}. We also compare this cohomology with the cohomology of Rota-Baxter algebras. In Section \ref{sec-4}, we study formal deformation theory and rigidity of modified Rota-Baxter algebras. Finally, abelian extensions are considered in Section \ref{sec-5}.

\medskip

All vector spaces, linear and multilinear maps, tensor products are over a field ${\bf k}$ of characteristic $0$.



\section{Modified Rota-Baxter algebras}\label{sec-2}

In this section, we recall modified Rota-Baxter algebras and define bimodules over them. Given a modified Rota-Baxter algebra and a bimodule over it, we also consider the corresponding semidirect product. We show that a modified Rota-Baxter algebra induces a new associative structure on the underlying vector space. We also consider some bimodules over this induced associative algebra.

\begin{definition}
\begin{enumerate}
\item Let $A$ be an associative algebra and $\kappa \in {\bf k}$. A linear map $R : A \rightarrow A$ is said to be a {\bf modified Rota-Baxter operator of weight $\kappa$} if
\begin{align*}
R(a) \cdot  R(b) = R \big( R(a) \cdot  b + a \cdot  R(b) \big) + \kappa ~ \! a \cdot  b, ~\text{ for } a, b \in A.
\end{align*}
\item A {\bf modified Rota-Baxter algebra of weight $\kappa$} is a pair $(A, R)$ consisting of an associative algebra $A$ and a modified Rota-Baxter operator $R: A \rightarrow A$ of weight $\kappa$.
\end{enumerate}
\end{definition}

\begin{definition}
Let $(A, R)$ and $(A', R')$ be two modified Rota-Baxter algebras of weight $\kappa$. A {\bf morphism} of modified Rota-Baxter algebras from $(A,R)$ to $(A', R')$ is an algebra homomorphism $\varphi : A \rightarrow A'$ that satisfies $R' \circ \varphi = \varphi \circ R$.
\end{definition}


There is a close relationship between Rota-Baxter algebras and modified Rota-Baxter algebras. 

\begin{lemma}
Let $A$ be an associative algebra. Then a linear map $P: A \rightarrow A$ is a Rota-Baxter operator of weight $\lambda \in {\bf k}$ if and only if the map $R = \lambda ~ \! \mathrm{id} + 2P$ is a modified Rota-Baxter operator of weight $\kappa = - \lambda^2.$ Equivalently, $(A, P)$ is a Rota-Baxter algebra of weight $\lambda$ if and only if $(A, R= \lambda ~ \! \mathrm{id} + 2P)$ is a modified Rota-Baxter algebra of weight $\kappa = - \lambda^2.$
\end{lemma}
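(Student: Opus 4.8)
The plan is to verify the equivalence by direct substitution, exploiting the fact that both conditions are quadratic identities in the operator, so that substituting $R = \lambda\,\mathrm{id} + 2P$ into the modified Rota-Baxter identity and expanding should, after collecting terms, reduce exactly to (a nonzero multiple of) the Rota-Baxter identity of weight $\lambda$. Since $\lambda\,\mathrm{id} + 2P$ is invertible as an affine change of the unknown operator when we treat $P$ as arbitrary (the map $P \mapsto \lambda\,\mathrm{id} + 2P$ is a bijection on linear endomorphisms of $A$), it suffices to prove one implication by algebra and note the substitution is reversible; equivalently, one simply checks that the two displayed identities are literally the same equation after the substitution.

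First I would fix $a, b \in A$ and compute $R(a)\cdot R(b) = (\lambda a + 2P(a))\cdot(\lambda b + 2P(b)) = \lambda^2\, a\cdot b + 2\lambda\, a\cdot P(b) + 2\lambda\, P(a)\cdot b + 4\, P(a)\cdot P(b)$. Next I would compute the right-hand side of the modified identity: $R\big(R(a)\cdot b + a\cdot R(b)\big) + \kappa\, a\cdot b$. Here $R(a)\cdot b + a\cdot R(b) = \lambda\, a\cdot b + 2P(a)\cdot b + \lambda\, a\cdot b + 2a\cdot P(b) = 2\lambda\, a\cdot b + 2P(a)\cdot b + 2a\cdot P(b)$, so applying $R = \lambda\,\mathrm{id} + 2P$ gives $2\lambda^2\, a\cdot b + 2\lambda\, P(a)\cdot b + 2\lambda\, a\cdot P(b) + 4\lambda\, P(a\cdot b) + 4\, P\big(P(a)\cdot b + a\cdot P(b)\big)$. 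Adding $\kappa\, a\cdot b = -\lambda^2\, a\cdot b$ yields $\lambda^2\, a\cdot b + 2\lambda\, P(a)\cdot b + 2\lambda\, a\cdot P(b) + 4\lambda\, P(a\cdot b) + 4\, P\big(P(a)\cdot b + a\cdot P(b)\big)$.

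Then I would equate the two expressions. The terms $\lambda^2\, a\cdot b$, $2\lambda\, a\cdot P(b)$, and $2\lambda\, P(a)\cdot b$ cancel from both sides, leaving $4\, P(a)\cdot P(b) = 4\lambda\, P(a\cdot b) + 4\, P\big(P(a)\cdot b + a\cdot P(b)\big)$. Dividing by $4$ recovers precisely $P(a)\cdot P(b) = P\big(P(a)\cdot b + a\cdot P(b)\big) + \lambda\, P(a\cdot b)$, which is the Rota-Baxter identity of weight $\lambda$. Since every step is an equality of expressions and the substitution $P \mapsto R = \lambda\,\mathrm{id} + 2P$ has inverse $P = \tfrac12(R - \lambda\,\mathrm{id})$ (using $\mathrm{char}\,{\bf k} = 0$), the chain of equalities shows that $R$ satisfies the modified Rota-Baxter identity of weight $-\lambda^2$ if and only if $P$ satisfies the Rota-Baxter identity of weight $\lambda$. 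The second formulation in terms of the algebras $(A,P)$ and $(A,R)$ is then immediate from the definitions. There is no real obstacle here; the only point requiring care is keeping track of the coefficients $2$ and $4$ arising from the factor $2$ in $R = \lambda\,\mathrm{id} + 2P$, and noting that the characteristic-zero hypothesis is what makes the substitution invertible so that the equivalence goes both ways.
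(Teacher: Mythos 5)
Your computation is correct, and it is exactly the routine verification the paper leaves out: the lemma is stated without proof (it is the observation credited to \cite{zhang,zhang2}), and the paper's proof of the analogous bimodule statement, Proposition 2.5 for $(M, S=\lambda\,\mathrm{id}_M+2Q)$, proceeds by the very same expansion of $R=\lambda\,\mathrm{id}+2P$ and cancellation of the $\lambda^2$ and $2\lambda$ terms. One minor remark: the appeal to invertibility of $P\mapsto \lambda\,\mathrm{id}+2P$ is not really needed, since your chain of equalities already shows the two identities differ by an overall factor of $4$ and hence are equivalent for the fixed pair $(P,R)$; all that is used is that $2$ is invertible in ${\bf k}$, which holds as $\mathrm{char}\,{\bf k}=0$.
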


Let $A$ be an associative algebra. Consider the direct sum $A \oplus A$ with the direct product associative structure
\begin{align*}
(a, b) \bullet (a', b') = (a \cdot a', b \cdot b'), \text{ for } (a, b), (a', b') \in A \oplus A.
\end{align*}
Note that the subspace $A_\triangle = \{ (a, a) | ~ a \in A \} \subset A \oplus A$ is an associative subalgebra. Modified Rota-Baxter operators of weight $-1$ are useful to find compliments of $A_\triangle$ which is also a subalgebra. To see this, let $A_{- \triangle} \subset A \oplus A$ be the subspace given by $A_{- \triangle} = \{ (a, -a)|~ a \in A \}$. For any linear map $R : A \rightarrow A$, we define a map $\widehat{R} : A_{- \triangle} \rightarrow A_{\triangle}$ by $\widehat{R}(a, -a) = (- R(a), -R(a))$, for $(a, -a) \in A_{- \triangle}$. Consider the graph 
\begin{align*}
\mathrm{Gr}(\widehat{R}) =~& \{ \widehat{R}(a, -a) + (a, -a) |~ a \in A \}\\
=~& \{ (-R(a) + a , -R(a) -a)|~ a \in A \} \subset A \oplus A
\end{align*}
of the map $\widehat{R}$. Note that $\mathrm{Gr}(\widehat{R})$ is a compliment (as a subspace) of $A_\triangle$ in the space $A \oplus A$. However, $\mathrm{Gr}(\widehat{R})$ is in general not a subalgebra. For any $a, b \in A$, we observe that
\begin{align*}
&\big(  -R(a) + a, -R(a) -a \big)  \bullet \big( -R(b) + b, -R(b) -b  \big) \\
&= \big( R(a) \cdot R(b) - R(a) \cdot b - a \cdot R(b) + a \cdot b,~ R(a) \cdot R(b) + R(a) \cdot b + a \cdot R(b) + a \cdot b    \big)
\end{align*}
This is in $\mathrm{Gr}(\widehat{R})$ if and only if
\begin{align*}
R(a) \cdot R(b) + a \cdot b = R \big(  R(a) \cdot b + a \cdot R(b) \big).
\end{align*}
In other words, $\mathrm{Gr}(\widehat{R})$ is a subalgebra of $A \oplus A$ if and only if $R$ is a modified Rota-Baxter operator of weight $-1$.
\medskip

Next we define bimodules over a modified Rota-Baxter algebra.

\begin{definition}
Let $(A, R)$ be a modified Rota-Baxter algebra of weight $\kappa$. A {\bf bimodule} over $(A, R)$ consists of a pair $(M,S)$ in which $M$ is an $A$-bimodule and $S:M \rightarrow M$ is a linear map satisfying for $a \in A,$ $u \in M$,
\begin{align*}
R(a) \cdot S(u) = S \big( R(a) \cdot u + a \cdot S(u) \big) + \kappa ~ \! a \cdot u,\\
S(u) \cdot R(a) = S \big( S(u) \cdot a + u \cdot R(a) \big) + \kappa ~ \! u \cdot a.
\end{align*}
\end{definition}

If $(A,R)$ is a modified Rota-Baxter algebra of weight $\kappa$, then the pair $(A,R)$ itself is a bimodule over it, where the $A$-bimodule structure on $A$ is given by the algebra multiplication. This is called the adjoint bimodule.

The following result finds the relation between bimodules over Rota-Baxter algebras and over modified Rota-Baxter algebras.

\begin{proposition}
Let $(A, P)$ be a Rota-Baxter algebra of weight $\lambda$, and $(M, Q)$ be a bimodule over it. Then $(M, S =\lambda ~\! \mathrm{id}_M + 2 Q)$ is a bimodule over the modified Rota-Baxter algebra $(A, R = \lambda ~ \! \mathrm{id}_A + 2P)$ of weight $\kappa = - \lambda^2$.
\end{proposition}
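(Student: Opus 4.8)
The plan is to reduce the statement to a direct verification, borrowing the operator part from the first Lemma. By that Lemma we already know that $(A, R = \lambda\,\mathrm{id}_A + 2P)$ is a modified Rota-Baxter algebra of weight $\kappa = -\lambda^2$, and since $M$ is an $A$-bimodule by hypothesis, the only thing left to check is that $S = \lambda\,\mathrm{id}_M + 2Q$ satisfies the two compatibility identities in the definition of a bimodule over $(A,R)$, namely
\begin{align*}
R(a)\cdot S(u) &= S\big(R(a)\cdot u + a\cdot S(u)\big) + \kappa\, a\cdot u,\\
S(u)\cdot R(a) &= S\big(S(u)\cdot a + u\cdot R(a)\big) + \kappa\, u\cdot a,
\end{align*}
for all $a\in A$ and $u\in M$.

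Next I would observe that these two identities are mirror images of one another: the first uses only the left $A$-action in the ``inner'' position and should reduce to the first defining relation of the bimodule $(M,Q)$ over the Rota-Baxter algebra $(A,P)$, while the second is its right-handed analogue and reduces to the second such relation. Hence it suffices to establish the first identity in full and then repeat the argument verbatim with the roles of left and right interchanged.

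For the first identity I would simply substitute $R(a) = \lambda a + 2P(a)$, $S(u) = \lambda u + 2Q(u)$ and $\kappa = -\lambda^2$ on both sides and collect terms. The left-hand side becomes $\lambda^2\, a\cdot u + 2\lambda\, P(a)\cdot u + 2\lambda\, a\cdot Q(u) + 4\, P(a)\cdot Q(u)$. Expanding the right-hand side, applying $S$ once more to the inner expression, and adding $-\lambda^2\, a\cdot u$, every term matches the left-hand side except that the left-hand side carries $4\, P(a)\cdot Q(u)$ whereas the right-hand side carries $4\big(Q(P(a)\cdot u + a\cdot Q(u)) + \lambda\, Q(a\cdot u)\big)$; these two agree precisely by the first defining identity of $(M,Q)$ as a bimodule over $(A,P)$. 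So the two sides coincide, and the second identity follows by symmetry.

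There is no genuine obstacle here: the statement is essentially a bookkeeping identity, and the only non-formal input is the pair of Rota-Baxter bimodule relations for $(M,Q)$, each of which is used exactly once (the weight $\lambda$ of $(A,P)$ converting into $\kappa = -\lambda^2$ through the quadratic cross terms). The one point requiring a little care is to keep track of the factor $2$ appearing in both $R$ and $S$, since it is responsible for the coefficient $4 = 2\cdot 2$ in front of the $PQ$-type terms and hence for the clean cancellation.
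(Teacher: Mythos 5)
Your proposal is correct and follows essentially the same route as the paper: substitute $R=\lambda\,\mathrm{id}_A+2P$, $S=\lambda\,\mathrm{id}_M+2Q$, $\kappa=-\lambda^2$, expand, and use each of the two Rota-Baxter bimodule identities for $(M,Q)$ exactly once (the $4\,P(a)\cdot Q(u)$ term being the only non-formal cancellation), with the second identity obtained by the symmetric computation. The paper's proof is just this verification written out for the first identity, with the second left as ``similarly''.
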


\begin{proof}
For any $a \in A$ and $u \in M$, we have
\begin{align*}
R(a) \cdot S(u ) =~& \big( \lambda a + 2P (a)  \big) \cdot \big(  \lambda u + 2Q (u) \big) \\
=~& \lambda^2 ~\! a \cdot u + 2 \lambda \big( P(a) \cdot u + a \cdot Q(u)  \big) + 4 ~\! P(a) \cdot Q(u) \\
=~& \lambda^2 ~\! a \cdot u + 2 \lambda \big( P(a) \cdot u + a \cdot Q(u)  \big) + 4 ~\! Q \big(  P(a) \cdot u + a \cdot Q(u) + \lambda ~\! a \cdot u \big) \\
=~& (\lambda ~\! \mathrm{id}_M + 2 Q) \big( (\lambda ~\! \mathrm{id}_A + 2 P)(a) \cdot u ~+~ a \cdot (\lambda \mathrm{id}_M + 2 Q)(u)  \big) ~-~ \lambda^2 ~\! a \cdot u \\
=~& S \big(   R(a) \cdot u + a \cdot S(u) \big) + \kappa ~\! a \cdot u. 
\end{align*}
Similarly, we can show that
\begin{align*}
S(u) \cdot R(a) = S \big( S(u) \cdot a + u \cdot R(a) \big) + \kappa ~\! u \cdot a.
\end{align*}
This completes the proof.
\end{proof}

\begin{proposition}
Let $(A, R)$ be a modified Rota-Baxter algebra of weight $\kappa$, and $\{ (M_i,S_i) \}_{i \in I}$be a family of bimodules over it. Then $(\oplus_{i \in I} M_i, \oplus_{i \in I} S_i)$ is also a bimodule over $(A, R)$.
\end{proposition}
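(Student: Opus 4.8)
The plan is to reduce everything to a componentwise check. First I would recall that the direct sum $\bigoplus_{i \in I} M_i$ carries a natural $A$-bimodule structure, where the left and right actions are defined componentwise: $a \cdot (u_i)_{i \in I} = (a \cdot u_i)_{i \in I}$ and $(u_i)_{i \in I} \cdot a = (u_i \cdot a)_{i \in I}$. Since each $u = (u_i)_{i \in I}$ has only finitely many nonzero components, these actions are well-defined, and associativity and bilinearity follow directly from the corresponding properties of each $M_i$. Likewise, the map $S = \bigoplus_{i \in I} S_i$ sending $(u_i)_{i \in I}$ to $(S_i(u_i))_{i \in I}$ is a well-defined linear endomorphism of $\bigoplus_{i \in I} M_i$.

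Next, I would verify the two defining identities of a bimodule over $(A,R)$. Fix $a \in A$ and $u = (u_i)_{i \in I} \in \bigoplus_{i \in I} M_i$. Because the $A$-actions and the operator $S$ are all defined componentwise, each side of the identity
\begin{align*}
R(a) \cdot S(u) = S \big( R(a) \cdot u + a \cdot S(u) \big) + \kappa ~\! a \cdot u
\end{align*}
is the tuple whose $i$-th component is obtained by applying the corresponding expression inside $M_i$ with $u$ replaced by $u_i$. Since $(M_i, S_i)$ is a bimodule over $(A,R)$, the $i$-th components agree for every $i \in I$, hence the two sides are equal. The second identity
\begin{align*}
S(u) \cdot R(a) = S \big( S(u) \cdot a + u \cdot R(a) \big) + \kappa ~\! u \cdot a
\end{align*}
is checked in exactly the same way. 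This shows that $(\bigoplus_{i \in I} M_i, \bigoplus_{i \in I} S_i)$ satisfies all the axioms of Definition, so it is a bimodule over $(A,R)$.

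There is no real obstacle here; the only point requiring a small amount of care is that one must work with the direct sum (tuples with finite support) rather than the direct product, so that the componentwise actions and the componentwise operator are genuinely defined on all of $\bigoplus_{i \in I} M_i$. Once that is observed, the proof is a routine componentwise reduction to the hypothesis that each $(M_i, S_i)$ is a bimodule.
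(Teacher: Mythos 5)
Your proof is correct and follows essentially the same route as the paper: equip $\bigoplus_{i\in I} M_i$ with the componentwise $A$-bimodule structure and verify the two modified Rota-Baxter bimodule identities component by component, using that each $(M_i,S_i)$ satisfies them. The only difference is cosmetic — you also spell out the finite-support remark and the second identity, which the paper leaves implicit.
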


\begin{proof}
The $A$-bimodule structure on $\oplus_{i \in I} M_i$ is given by the componentwise $A$-bimodule structure on each $M_i$. For any $a \in A$ and $(u_i)_{i \in I} \in \oplus_{i \in I} M_i$, we have
\begin{align*}
R(a) \cdot (\oplus_{i \in I} S_i)(u_i)_{i \in I} =~& R(a) \cdot (S_i (u_i))_{i \in I} \\
=~& \big( R(a) \cdot S_i (u_i) \big)_{i \in I} \\
=~& \big( S_i ( R(a) \cdot u_i + a \cdot S_i (u_i)) + \kappa ~ \! a \cdot u_i \big)_{i \in I} \\
=~& (\oplus_{i \in I} S_i) \bigg( R(a) \cdot (u_i)_{i \in I} + a \cdot \big( (\oplus_{i \in I} S_i) (u_i)_{i \in I} \big)  \bigg) + \kappa ~ \! a \cdot (u_i)_{i \in I}.
\end{align*}
This shows that $(\oplus_{i \in I} M_i, \oplus_{i \in I} S_i)$ is a bimodule over the modified Rota-Baxter algebra $(A, R)$ of weight $\kappa$.
\end{proof}

Recall that, if $M$ is an $A$-bimodule, then $\mathrm{End}(M)$ can be equipped with an $A$-bimodule structure with left and right $A$-actions
\begin{align*}
(a \odot f) (u) = f (u \cdot a) ~~~ \text{ and } ~~~ (f \odot a) (u) = f (a \cdot u), \text{ for } a \in A, f \in \mathrm{End}(M), u \in M.
\end{align*}

\begin{proposition}
Let $(A,R)$ be a modified Rota-Baxter algebra of weight $\kappa$, and $(M,S)$ be a bimodule over it. Then $(\mathrm{End}(M), \widetilde{S})$ is also a bimodule over $(A, R)$, where $\widetilde{S} : \mathrm{End}(M) \rightarrow \mathrm{End}(M)$ is given by $\widetilde{S} (f) = - f \circ S$, for $f \in \mathrm{End}(M)$.
\end{proposition}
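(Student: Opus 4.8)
The plan is to verify directly the two defining identities of a bimodule over the modified Rota-Baxter algebra $(A,R)$ for the pair $(\mathrm{End}(M),\widetilde{S})$, using the $\odot$-bimodule structure on $\mathrm{End}(M)$ recalled just above the statement together with the two compatibility identities satisfied by $(M,S)$. Since all maps involved are linear and two elements of $\mathrm{End}(M)$ agree if and only if they agree on every $u\in M$, it suffices to evaluate each identity on an arbitrary $u\in M$ and reduce to the axioms for $(M,S)$.

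The key observation that makes the computation work is that the left $A$-action on $\mathrm{End}(M)$ is defined through the \emph{right} $A$-action on $M$, and the right $A$-action on $\mathrm{End}(M)$ through the \emph{left} $A$-action on $M$. Consequently the first bimodule identity for $(\mathrm{End}(M),\widetilde{S})$, namely $R(a)\odot\widetilde{S}(f) = \widetilde{S}\big(R(a)\odot f + a\odot\widetilde{S}(f)\big) + \kappa\, a\odot f$, will be a direct consequence of the \emph{second} compatibility identity for $(M,S)$, and symmetrically the second identity for $(\mathrm{End}(M),\widetilde{S})$ follows from the \emph{first} one for $(M,S)$.

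Concretely, for the first identity I would evaluate both sides on $u\in M$. The left-hand side becomes $\big(R(a)\odot\widetilde{S}(f)\big)(u) = \widetilde{S}(f)\big(u\cdot R(a)\big) = -f\big(S(u\cdot R(a))\big)$. On the right-hand side, unwinding the definitions of $\odot$ and of $\widetilde{S}(g)=-g\circ S$ term by term yields $-f\big(S(u)\cdot R(a)\big) + f\big(S(S(u)\cdot a)\big) + \kappa\, f\big(u\cdot a\big)$. These two expressions coincide precisely because, by linearity of $S$ and the second bimodule axiom $S(u)\cdot R(a) = S\big(S(u)\cdot a + u\cdot R(a)\big) + \kappa\, u\cdot a$, one has $S(u\cdot R(a)) = S(u)\cdot R(a) - S(S(u)\cdot a) - \kappa\, u\cdot a$. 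The second identity for $(\mathrm{End}(M),\widetilde{S})$ is treated in exactly the same fashion, now invoking the first axiom $R(a)\cdot S(u) = S\big(R(a)\cdot u + a\cdot S(u)\big) + \kappa\, a\cdot u$.

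I do not anticipate a genuine difficulty: the proof is a bookkeeping exercise. The one point requiring care is precisely the crossed nature of the $\odot$-actions combined with the minus sign in $\widetilde{S}$ — it is easy to pair the first $\mathrm{End}(M)$-identity with the wrong $(M,S)$-axiom, or to lose a sign when expanding $a\odot\widetilde{S}(f)$. Writing $\widetilde{S}(g)(u) = -g(S(u))$ explicitly and expanding each $\odot$-term separately before collecting avoids both pitfalls.
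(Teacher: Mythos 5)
Your proposal is correct and follows essentially the same route as the paper: evaluate both bimodule identities for $(\mathrm{End}(M),\widetilde{S})$ on an arbitrary $u\in M$, rewrite $S(u\cdot R(a))$ (resp.\ $S(R(a)\cdot u)$) via the second (resp.\ first) axiom for $(M,S)$, and match terms, keeping track of the crossed $\odot$-actions and the sign in $\widetilde{S}$. The identity $S(u\cdot R(a)) = S(u)\cdot R(a) - S(S(u)\cdot a) - \kappa\, u\cdot a$ you isolate is exactly the step the paper uses.
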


\begin{proof}
For any $a \in A$, $f \in \mathrm{End}(M)$ and $u \in M$, we have
\begin{align*}
\big( R(a) \odot \widetilde{S}(f) \big)(u) = \widetilde{S}(f) \big(  u \cdot R(a) \big) 
=~& - f \big( S(u \cdot R(a))  \big) \\
=~& - f \big(  S(u) \cdot R(a) - S (S(u) \cdot a) - \kappa ~ \! u \cdot a  \big) \\
=~& - f \big(  S(u) \cdot R(a) \big) - \widetilde{S} (f) (S(u) \cdot a) + \kappa ~ \! f(u \cdot a) \\
=~& - \big( R(a) \odot f + a \odot \widetilde{S}(f)  \big) (S(u)) + \kappa ~ \! f (u \cdot a) \\
=~& \big( \widetilde{S} \big(  R(a) \odot f + a \odot \widetilde{S}(f)  \big) + \kappa ~ \! a \odot f   \big) (u)
\end{align*}
and
\begin{align*}
\big( \widetilde{S} (f) \odot R(a)  \big)(u) = \widetilde{S} (f) \big( R(a) \cdot u   \big) 
=~& - f \big(  S (R(a) \cdot u) \big) \\
=~& -f \big(  R(a) \cdot S(u) - S (a \cdot S(u)) - \kappa~ \! a \cdot u \big) \\
=~& -f \big(  R(a) \cdot S(u) \big) - \widetilde{S}(f) (a \cdot S(u)) + \kappa ~ \! f (a \cdot u) \\
=~& - \big( f \odot R(a) + \widetilde{S}(f) \odot a  \big)(S(u)) + \kappa ~ \! f (a \cdot u) \\
=~& \big(  \widetilde{S} \big( \widetilde{S}(f) \odot a + f \odot R(a) \big) + \kappa ~ \! f \odot a \big) (u).
\end{align*}
This shows that $(\mathrm{End}(M), \widetilde{S})$ is a bimodule over the modified Rota-Baxter algebra $(A, R)$ of weight $\kappa$.
\end{proof}

\begin{proposition}\label{semi-mrba}
(Semidirect product) Let $(A, R)$ be a modified Rota-Baxter algebra of weight $\kappa$, and $(M,S)$ be a bimodule over it. Then the pair $(A \oplus M, R \oplus S)$ is a modified Rota-Baxter algebra of weight $\kappa$, where the associative algebra structure on $A \oplus M$ is given by the semidirect product
\begin{align*}
(a, u) \cdot_\ltimes (b, v) = (a \cdot  b, a \cdot v + u \cdot b), \text{ for } (a, u), (b, v) \in A \oplus M. 
\end{align*}
\end{proposition}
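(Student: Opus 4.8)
The plan is to verify the two defining ingredients of a modified Rota-Baxter algebra for the pair $(A \oplus M, R \oplus S)$ in turn: first that $(A \oplus M, \cdot_\ltimes)$ is an associative algebra, and then that the componentwise map $R \oplus S$ is a modified Rota-Baxter operator of weight $\kappa$ on it.

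For the first ingredient, associativity of $\cdot_\ltimes$ is the classical semidirect product computation: expanding $\big((a,u)\cdot_\ltimes (b,v)\big)\cdot_\ltimes (c,w)$ and $(a,u)\cdot_\ltimes \big((b,v)\cdot_\ltimes (c,w)\big)$, the $A$-components agree by associativity of $A$, and the $M$-components agree because $M$ is an $A$-bimodule, i.e.\ $a\cdot(b\cdot v) = (a\cdot b)\cdot v$, $(u\cdot b)\cdot c = u\cdot(b\cdot c)$ and $a\cdot(u\cdot c) = (a\cdot u)\cdot c$. I would merely state this rather than write it out.

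For the second ingredient, I would compute both sides of the modified Rota-Baxter identity on a pair of elements $(a,u), (b,v) \in A \oplus M$. On the left-hand side, $(R\oplus S)(a,u)\cdot_\ltimes (R\oplus S)(b,v) = \big(R(a)\cdot R(b),\ R(a)\cdot S(v) + S(u)\cdot R(b)\big)$; here the $A$-slot is rewritten via the modified Rota-Baxter identity for $R$, namely $R(a)\cdot R(b) = R\big(R(a)\cdot b + a\cdot R(b)\big) + \kappa\, a\cdot b$, and the $M$-slot is rewritten using the two bimodule compatibilities for $(M,S)$, namely $R(a)\cdot S(v) = S\big(R(a)\cdot v + a\cdot S(v)\big) + \kappa\, a\cdot v$ together with $S(u)\cdot R(b) = S\big(S(u)\cdot b + u\cdot R(b)\big) + \kappa\, u\cdot b$. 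On the right-hand side, one expands $(R\oplus S)\big((R\oplus S)(a,u)\cdot_\ltimes (b,v) + (a,u)\cdot_\ltimes (R\oplus S)(b,v)\big) + \kappa\,(a,u)\cdot_\ltimes (b,v)$ directly from the definitions of $\cdot_\ltimes$ and of $R\oplus S$, obtaining the pair whose $A$-component is $R\big(R(a)\cdot b + a\cdot R(b)\big) + \kappa\, a\cdot b$ and whose $M$-component is $S\big(R(a)\cdot v + a\cdot S(v) + S(u)\cdot b + u\cdot R(b)\big) + \kappa\,(a\cdot v + u\cdot b)$. Comparing componentwise shows the two sides coincide.

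I do not anticipate a genuine obstacle: the statement is a bookkeeping verification, and the only point requiring care is invoking the left and right bimodule axioms for $(M,S)$ with the $A$-actions on $M$ written in the correct order. Once both sides are expanded, the $A$-components match by the modified Rota-Baxter identity in $A$ and the $M$-components match by summing the two bimodule identities, which completes the proof.
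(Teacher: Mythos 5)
Your proposal is correct and follows essentially the same route as the paper: expand $(R\oplus S)(a,u)\cdot_\ltimes (R\oplus S)(b,v)$, apply the modified Rota-Baxter identity for $R$ in the $A$-component and the two bimodule compatibilities for $(M,S)$ in the $M$-component, and repackage the result as $(R\oplus S)\big((R\oplus S)(a,u)\cdot_\ltimes (b,v)+(a,u)\cdot_\ltimes (R\oplus S)(b,v)\big)+\kappa\,(a,u)\cdot_\ltimes (b,v)$. The only cosmetic difference is that you spell out the (standard) associativity check for the semidirect product, which the paper takes as known.
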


\begin{proof}
For $(a,u), (b, v) \in A \oplus M$, we have
\begin{align*}
&(R \oplus S) (a,u) \cdot_\ltimes (R \oplus S)(b,v) \\
&= (R(a), S(u)) \cdot_\ltimes (R(b), S(v)) \\
&= \big(  R(a) \cdot R(b),~ R(a) \cdot S(v) + S(u) \cdot R(b) \big) \\
&= \bigg(  R ( R(a) \cdot  b + a \cdot  R(b)) + \kappa~ \! a \cdot  b, ~  S ( R(a) \cdot v + a \cdot S(v)) + \kappa ~ \! a \cdot v  + S ( S(u) \cdot b + u \cdot R(b)) + \kappa ~ \! u \cdot b \bigg) \\
&= (R \oplus S) \bigg(    \big( R(a) \cdot  b , ~ R(a) \cdot v + S(u) \cdot b   \big) +   \big(  a \cdot  R(b), ~ a \cdot S(v) + u \cdot R(b)  \big) \bigg)  + \kappa ~ \! \big( a \cdot  b, ~ a \cdot v + u \cdot b  \big)  \\
&= (R \oplus S) \bigg( (R \oplus S)(a,u) \cdot_\ltimes (b, v) ~+~ (a, u) \cdot_\ltimes (R \oplus S)(b,v)    \bigg) + \kappa ~ \! (a, u) \cdot_\ltimes (b, v).
\end{align*}
This shows that $R \oplus S$ is a modified Rota-Baxter operator of weight $\kappa$ on the semidirect product associative algebra $A \oplus M$. In other words, $(A \oplus M, R \oplus S)$ is a modified Rota-Baxter algebra of weight $\kappa$.
\end{proof}

In the following, we show that a modified Rota-Baxter algebra of weight $\kappa$ induces a new associative algebra structure. This associative algebra will be useful in the construction of the cohomology of the modified Rota-Baxter algebra.

\begin{proposition}
Let $(A, R)$ be a modified Rota-Baxter algebra of weight $\kappa$. Then we have the followings.

(i) The underlying vector space $A$ inherits a new associative algebra structure with the product
\begin{align*}
a \ast_R b = R(a) \cdot  b + a \cdot  R(b), ~ \text{ for } a, b \in A.
\end{align*}
(We denote this associative algebra simply by $A_R$).

(ii) The pair $(A_R, R)$ is a modified Rota-Baxter algebra of weight $\kappa$.
\end{proposition}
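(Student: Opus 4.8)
The strategy for both parts is direct verification; the only input beyond the associativity of $\cdot$ in $A$ is the defining identity of $R$, which I will use in the rearranged form
\begin{align*}
R\big(R(a) \cdot b + a \cdot R(b)\big) = R(a) \cdot R(b) - \kappa\, a \cdot b, \qquad a, b \in A.
\end{align*}

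For part (i), I would expand $(a \ast_R b) \ast_R c = R(a \ast_R b) \cdot c + (a \ast_R b) \cdot R(c)$, substitute $a \ast_R b = R(a) \cdot b + a \cdot R(b)$, and apply the displayed identity to the term $R\big(R(a) \cdot b + a \cdot R(b)\big)$. After using associativity of $\cdot$, this collapses to
\begin{align*}
(a \ast_R b) \ast_R c = R(a) \cdot R(b) \cdot c + R(a) \cdot b \cdot R(c) + a \cdot R(b) \cdot R(c) - \kappa\, a \cdot b \cdot c.
\end{align*}
A symmetric computation of $a \ast_R (b \ast_R c)$, now applying the displayed identity to $R\big(R(b) \cdot c + b \cdot R(c)\big)$, gives the same four-term expression; hence $\ast_R$ is associative and $A_R$ is an associative algebra.

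For part (ii), I must check that $R$ is a modified Rota-Baxter operator of weight $\kappa$ for the product $\ast_R$, i.e.
\begin{align*}
R(a) \ast_R R(b) = R\big(R(a) \ast_R b + a \ast_R R(b)\big) + \kappa\, (a \ast_R b).
\end{align*}
Expanding the left-hand side gives $R^2(a) \cdot R(b) + R(a) \cdot R^2(b)$. Expanding the $\ast_R$-products on the right-hand side and using linearity of $R$ produces
\begin{align*}
R\big(R^2(a) \cdot b\big) + 2\, R\big(R(a) \cdot R(b)\big) + R\big(a \cdot R^2(b)\big) + \kappa\, R(a) \cdot b + \kappa\, a \cdot R(b).
\end{align*}
The key step is to apply the defining identity of $R$ two more times, once with $a$ replaced by $R(a)$ and once with $b$ replaced by $R(b)$, which yields
\begin{align*}
R\big(R^2(a) \cdot b\big) &= R^2(a) \cdot R(b) - R\big(R(a) \cdot R(b)\big) - \kappa\, R(a) \cdot b, \\
R\big(a \cdot R^2(b)\big) &= R(a) \cdot R^2(b) - R\big(R(a) \cdot R(b)\big) - \kappa\, a \cdot R(b).
\end{align*}
Substituting these into the right-hand side, the three copies of $R\big(R(a) \cdot R(b)\big)$ cancel (total coefficient $-1 + 2 - 1 = 0$) and the $\kappa$-terms cancel in pairs, leaving exactly $R^2(a) \cdot R(b) + R(a) \cdot R^2(b)$, which matches the left-hand side.

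The computations are routine; the only point requiring care is in part (ii), where one must resist simplifying $R\big(R(a) \cdot R(b)\big)$ directly — it is not governed by the identity — and instead organise the bookkeeping so that it appears with total coefficient zero. Once that is noticed, both parts follow mechanically.
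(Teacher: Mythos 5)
Your proof is correct and follows essentially the same route as the paper: direct verification using the rearranged identity $R\big(R(a)\cdot b + a\cdot R(b)\big) = R(a)\cdot R(b) - \kappa\, a\cdot b$, together with its instances at $(R(a),b)$ and $(a,R(b))$ for part (ii). The only cosmetic difference is direction of the computation — the paper recognises $R(a)\ast_R b$ and $a\ast_R R(b)$ inside $R$ when rewriting the left-hand side, whereas you expand the right-hand side and cancel the $R\big(R(a)\cdot R(b)\big)$ and $\kappa$ terms — but the content is identical.
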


\begin{proof}
For any $a, b, c \in A$, we have
\begin{align*}
(a \ast_R b) \ast_R c =~& R(a \ast_R b) \cdot c + (a \ast_R b) \cdot R(c) \\
=~& \big( R(a) \cdot R(b) - \kappa ~ \! a \cdot b  \big) \cdot c + \big(  R(a) \cdot b + a \cdot R(b) \big) \cdot R(c) \\
=~& R(a) \cdot \big(  R(b) \cdot c + b \cdot R(c) \big) + a \cdot \big( R(b ) \cdot R(c) - \kappa ~ \! b \cdot c  \big) \\
=~& R(a) \cdot (b \ast_R c) + a \cdot R(b \ast_R c) \\
=~& a \ast_R (b \ast_R c).
\end{align*}
This shows that $A_R$ is an associative algebra. \\

(ii) For any $a, b \in A$, 
\begin{align*}
R(a) \ast_R R(b) =~& R^2 (a) \cdot R(b) + R(a) \cdot R^2 (b) \\
=~& R (R(a) \ast_R b) + \kappa ~ \! R(a) \cdot b + R (a \ast_R R(b)) + \kappa ~ \! a \cdot R(b) \\
=~& R \big( R(a) \ast_R b + a \ast_R R(b)  \big) + \kappa ~ \! a \ast_R b.
\end{align*}
This shows that $R$ is a modified Rota-Baxter operator of weight $\kappa$ on the associative algebra $A_R$. In other words, $(A_R, R)$ is a modified Rota-Baxter algebra of weight $\kappa$.
\end{proof}

\begin{proposition}\label{first-ar-bimod}
Let $(A, R)$ be a modified Rota-Baxter algebra of weight $\kappa$, and $(M,S)$ be a bimodule over it. 

(i) Then there is an $A_R$-bimodule structure on $M$ with left and right actions (denoted by the same notation) given by
\begin{align*}
a ~ {\cdot}_S ~ u := R(a) \cdot u + a \cdot S(u) \quad \text{ and } \quad u ~ {\cdot}_S ~ a := S(u) \cdot a + u \cdot R(a),
\end{align*}
for $a \in A_R$, $u \in M$. We denote this $A_R$-bimodule structure simply by ${M}_S$.

(ii) The pair $({M}_S, S)$ is bimodule over the modified Rota-Baxter algebra $(A_R,R)$ of weight $\kappa$.
\end{proposition}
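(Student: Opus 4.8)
The plan is to verify the defining axioms directly, in the same spirit as the proofs of the two preceding propositions; I will also record a cleaner conceptual route via the semidirect product. For part (i) one must check three identities, for all $a, b \in A$ and $u \in M$: left-module associativity $(a \ast_R b) \cdot_S u = a \cdot_S (b \cdot_S u)$, right-module associativity $(u \cdot_S a) \cdot_S b = u \cdot_S (a \ast_R b)$, and the left--right compatibility $(a \cdot_S u) \cdot_S b = a \cdot_S (u \cdot_S b)$. In each case I would expand both sides using the definitions of $\ast_R$ and $\cdot_S$, then use the modified Rota-Baxter identity for $R$ to rewrite $R(R(a) \cdot b + a \cdot R(b))$ as $R(a) \cdot R(b) - \kappa\, a \cdot b$, and the two defining identities of the bimodule $(M,S)$ to rewrite $S(R(a) \cdot u + a \cdot S(u))$ as $R(a) \cdot S(u) - \kappa\, a \cdot u$ and $S(S(u) \cdot a + u \cdot R(a))$ as $S(u) \cdot R(a) - \kappa\, u \cdot a$. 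After these substitutions, associativity of $A$ together with the $A$-bimodule axioms on $M$ forces the two sides to coincide, the stray weight-$\kappa$ terms cancelling exactly as in the proof that $A_R$ is associative.

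For part (ii) one must verify the two bimodule axioms for $(M_S, S)$ over $(A_R, R)$, namely $R(a) \cdot_S S(u) = S\big(R(a) \cdot_S u + a \cdot_S S(u)\big) + \kappa\, a \cdot_S u$ and its right-handed analogue. I would expand $R(a) \cdot_S S(u) = R^2(a) \cdot S(u) + R(a) \cdot S^2(u)$ and apply the $(M,S)$-axioms twice --- once to split off $R(a)$ from $R^2(a) \cdot S(u)$ and once to split off $S$ from $R(a) \cdot S^2(u)$ --- mimicking verbatim the computation showing that $R$ is a modified Rota-Baxter operator on $A_R$; the resulting terms reorganise into $S\big(R(a) \cdot_S u + a \cdot_S S(u)\big) + \kappa\big(R(a) \cdot u + a \cdot S(u)\big)$, and the last bracket equals $a \cdot_S u$. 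The second axiom follows by the symmetric computation, using the right-handed defining identity of $(M,S)$.

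Alternatively, and more cleanly, one can deduce both parts from the semidirect product: Proposition~\ref{semi-mrba} gives that $(A \oplus M, R \oplus S)$ is a modified Rota-Baxter algebra of weight $\kappa$, and then the proposition on the induced structure $A \mapsto A_R$ gives that $\big((A \oplus M)_{R \oplus S},\, R \oplus S\big)$ is again a modified Rota-Baxter algebra of weight $\kappa$. A one-line computation shows $(a,u) \ast_{R \oplus S} (b,v) = (a \ast_R b,\ a \cdot_S v + u \cdot_S b)$, identifying $(A \oplus M)_{R \oplus S}$ with the vector space $A_R \oplus M$ under the (candidate) semidirect product of $A_R$ with $M_S$. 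Reading off the $M$-components of the associativity identity for this product --- by successively setting two of the three ``module'' arguments to zero --- yields the three identities of part (i); reading off the $M$-components of the modified Rota-Baxter identity for $R \oplus S$ --- by setting $u = 0$, respectively $v = 0$ --- yields the two bimodule axioms of part (ii). Either way the only real point of care is keeping track of the weight-$\kappa$ terms and not interchanging the left and right $A$-actions (nor the operators $R$ and $S$); there is no conceptual obstacle.
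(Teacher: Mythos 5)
Your proposal is correct and in substance coincides with the paper's proof: the paper establishes (i) precisely by your semidirect-product route, computing $(a,u)\ltimes_{R\oplus S}(b,v)=\big(a\ast_R b,\ a\cdot_S v+u\cdot_S b\big)$ inside the associative algebra $(A\oplus M)_{R\oplus S}$ coming from Proposition \ref{semi-mrba}, and establishes (ii) by exactly the direct expansion $R(a)\cdot_S S(u)=R^{2}(a)\cdot S(u)+R(a)\cdot S^{2}(u)$ followed by two applications of the $(M,S)$-axioms that you describe. Your fully direct verification of (i) would also go through (the weight-$\kappa$ terms cancel just as you indicate), it is simply more computation than the paper's argument needs.
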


\begin{proof}
(i) Consider the (semidirect product) modified Rota-Baxter algebra $(A \oplus M, R \oplus S)$ of weight $\kappa$ given in Proposition \ref{semi-mrba}. This induces an associative algebra structure on the direct sum $A \oplus B$ ~(which we denote by $(A \oplus B)_{R \oplus S}$) with the product given by
\begin{align*}
(a , u) \ltimes_{R \oplus S} (b, v) =~& (R(a), S(u))  \ltimes_{R \oplus S} (b, v) ~+~ (a, u)  \ltimes_{R \oplus S} (R(b), S(v)) \\
=~& \big( R(a) \cdot b + a \cdot R(b),~ R(a) \cdot v + S(u) \cdot b + a \cdot S(v) + u \cdot R(b)  \big) \\
=~& \big( a \ast_R b,~ a ~{\cdot}_S~ v + u ~{\cdot}_S ~ b  \big).
\end{align*}
This expression of the product says that ${M}_S$ is an $A_R$-bimodule.

(ii) For any $a \in A$ and $u \in M$, we have
\begin{align*}
R(a) ~{\cdot}_S~ S(u) =~& R^2(a) \cdot S(u) + R(a) \cdot S^2(u) \\
=~& S \big( R^2(a) \cdot u + R(a) \cdot S(u)  \big)+ \kappa ~ \! R(a) \cdot u + S \big( R(a) \cdot S(u) + a \cdot S^2(u)  \big) + \kappa ~ \! a \cdot S(u) \\
=~& S \big( R(a)~ {\cdot}_S ~ u \big) + \kappa ~ \! R(a) \cdot u + S \big(  a ~ {\cdot}_S ~S(u) \big) + \kappa ~ \! a \cdot S(u) \\
=~& S \big( R(a) ~{\cdot}_S ~ u + a ~ {\cdot}_S ~ S(u)  \big) + \kappa ~ \! a ~ {\cdot}_S ~ u
\end{align*}
and
\begin{align*}
S(u) ~ {\cdot}_S ~ R(a) =~& S^2(u) \cdot R(a) + S(u) \cdot R^2(a) \\
=~& S \big(  S^2(u) \cdot a + S(u) \cdot R(a) \big) + \kappa ~ \! S(u) \cdot a + S \big( S(u) \cdot R(a) + u \cdot R^2(a)  \big) + \kappa ~ \! u \cdot R(a) \\
=~& S \big(  S(u) ~ {\cdot}_S ~ a \big) + \kappa ~ \! S(u) \cdot a + S \big( u ~{\cdot}_S ~ R(a)  \big)  + \kappa ~ \! u \cdot R(a) \\
=~& S \big(  S(u) ~{\cdot}_S ~ a +  u ~{\cdot}_S~ R(a)   \big) + \kappa ~ \! u ~ {\cdot}_S ~ a.
\end{align*}
This shows that $({M}_S, S)$ is a bimodule over the modified Rota-Baxter algebra $(A_R, R)$.
\end{proof}

\begin{remark}
When $(M,S) = (A,R)$ is the adjoint bimodule over the modified Rota-Baxter algebra $(A,R)$ of weight $\kappa$, then the $A_R$-bimodule ${M}_S$ coincides with the adjoint $A_R$-bimodule. Hence we have $({M}_S, S) = (A_R, R)$ the adjoint bimodule over the modified Rota-Baxter algebra $(A_R, R)$ of weight $\kappa$.
\end{remark}

Given a bimodule over a modified Rota-Baxter algebra $(A,R)$, in Proposition \ref{first-ar-bimod}, we constructed a bimodule over the induced modified Rota-Baxter algebra $(A_R,R)$. In the following, we will construct another bimodule over $(A_R,R)$. This new construction will be helpful to construct the cohomology of the modified Rota-Baxter algebra.

\begin{proposition}
Let $(A,R)$ be a modified Rota-Baxter algebra of weight $\kappa$, and $(M,S)$ be a bimodule over it. Then $M$ can be equipped with an $A_R$-bimodule structure with left and right $A_R$-actions
\begin{align*}
 a ~ \widetilde{\cdot} ~u := R(a) \cdot u - S (a \cdot u) \quad \text{ and } \quad
u ~ \widetilde{\cdot} ~ a := u \cdot R(a) - S (u \cdot a),
\end{align*}
for $a \in A_R$, $u \in M$. (We denote this $A_R$-bimodule structure simply by $\widetilde{M}$). Moreover, the pair $(\widetilde{M}, S)$ is a bimodule over the modified Rota-Baxter algebra $(A_R,R)$ of weight $\kappa$.
\end{proposition}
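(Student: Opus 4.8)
The plan is to deduce the entire statement from the proposition already established for $(\mathrm{End}(M),\widetilde{S})$, rather than carrying out a from-scratch verification of the axioms. Apply Proposition~\ref{first-ar-bimod} to the bimodule $(\mathrm{End}(M),\widetilde{S})$ over $(A,R)$ in place of $(M,S)$: part (i) turns $\mathrm{End}(M)$ into an $A_R$-bimodule $\mathrm{End}(M)_{\widetilde{S}}$ with actions $a \cdot_{\widetilde{S}} f = R(a) \odot f + a \odot \widetilde{S}(f)$ and $f \cdot_{\widetilde{S}} a = \widetilde{S}(f) \odot a + f \odot R(a)$, and part (ii) says $(\mathrm{End}(M)_{\widetilde{S}},\widetilde{S})$ is a bimodule over $(A_R,R)$. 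The first step I would carry out is the short computation, using $\widetilde{S}(f) = -f \circ S$ together with the definition of the $A$-bimodule $\mathrm{End}(M)$, that $(a \cdot_{\widetilde{S}} f)(u) = f(u \,\widetilde{\cdot}\, a)$ and $(f \cdot_{\widetilde{S}} a)(u) = f(a \,\widetilde{\cdot}\, u)$ for all $a \in A_R$, $f \in \mathrm{End}(M)$, $u \in M$.

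Granting this identification, I would obtain the $A_R$-bimodule axioms for $\widetilde{M}$ by evaluating the three $A_R$-bimodule axioms of $\mathrm{End}(M)_{\widetilde{S}}$ at $f = \mathrm{id}_M$ and an arbitrary $u \in M$; since $(a \cdot_{\widetilde{S}} \mathrm{id}_M)(u) = u \,\widetilde{\cdot}\, a$ and $(\mathrm{id}_M \cdot_{\widetilde{S}} a)(u) = a \,\widetilde{\cdot}\, u$, these collapse precisely to $u \,\widetilde{\cdot}\, (a \ast_R b) = (u \,\widetilde{\cdot}\, a) \,\widetilde{\cdot}\, b$, $\ (a \ast_R b) \,\widetilde{\cdot}\, u = a \,\widetilde{\cdot}\, (b \,\widetilde{\cdot}\, u)$, and $b \,\widetilde{\cdot}\, (u \,\widetilde{\cdot}\, a) = (b \,\widetilde{\cdot}\, u) \,\widetilde{\cdot}\, a$ (note that the left and right roles get transposed in passing from $\mathrm{End}(M)$ back to $M$, because the $\odot$-actions are the reversed ones). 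For the ``moreover'' part I would evaluate the two bimodule-over-$(A_R,R)$ identities of $(\mathrm{End}(M)_{\widetilde{S}},\widetilde{S})$ at $f = \mathrm{id}_M$, using $\widetilde{S}(\mathrm{id}_M) = -S$; the first reduces to $S(u) \,\widetilde{\cdot}\, R(a) = S\big( S(u) \,\widetilde{\cdot}\, a + u \,\widetilde{\cdot}\, R(a) \big) + \kappa\, u \,\widetilde{\cdot}\, a$ and the second to $R(a) \,\widetilde{\cdot}\, S(u) = S\big( R(a) \,\widetilde{\cdot}\, u + a \,\widetilde{\cdot}\, S(u) \big) + \kappa\, a \,\widetilde{\cdot}\, u$, which are exactly the two conditions in the definition of a bimodule over $(A_R,R)$.

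As a fallback, the statement also admits a direct proof: expand each axiom using only $x \,\widetilde{\cdot}\, y = R(x) \cdot y - S(x \cdot y)$, $\ y \,\widetilde{\cdot}\, x = y \cdot R(x) - S(y \cdot x)$, the associativity of $A$, the $A$-bimodule axioms of $M$, the two $(M,S)$-compatibility conditions (applied to $a \cdot u$ and $u \cdot b$, and for the ``moreover'' part with $R(a)$ or $S(u)$ substituted for $a$ or $u$), and the modified Rota-Baxter identity for $R$; all the $S$-nested terms and the $\kappa$-terms then cancel in pairs. The main obstacle in that approach is purely the bookkeeping of nested $S$-terms on triple products such as $R(a) \cdot u \cdot b$ — especially in part (ii), where $R^2$ and $S^2$ force one to use the $(M,S)$-conditions ``one level up''. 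The conceptual argument above sidesteps this entirely, so that is the route I would take.
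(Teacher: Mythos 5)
Your proof is correct, but it takes a genuinely different route from the paper's. The paper verifies the statement by direct expansion: it checks the three identities $(a \ast_R b)\,\widetilde{\cdot}\,u = a\,\widetilde{\cdot}\,(b\,\widetilde{\cdot}\,u)$, $(a\,\widetilde{\cdot}\,u)\,\widetilde{\cdot}\,b = a\,\widetilde{\cdot}\,(u\,\widetilde{\cdot}\,b)$, $(u\,\widetilde{\cdot}\,a)\,\widetilde{\cdot}\,b = u\,\widetilde{\cdot}\,(a \ast_R b)$ and the two weight-$\kappa$ compatibilities term by term, using the modified Rota--Baxter identity and the two conditions on $(M,S)$ — precisely the nested-$S$ bookkeeping you relegate to your fallback. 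Your main route instead pieces together two results the paper has already established at that point (the bimodule $(\mathrm{End}(M),\widetilde{S})$ with $\widetilde{S}(f)=-f\circ S$, and Proposition \ref{first-ar-bimod}), via the key identification $(a \cdot_{\widetilde{S}} f)(u) = f(u\,\widetilde{\cdot}\,a)$ and $(f \cdot_{\widetilde{S}} a)(u) = f(a\,\widetilde{\cdot}\,u)$; I verified this identification and the subsequent evaluations at $f=\mathrm{id}_M$ (with $\widetilde{S}(\mathrm{id}_M)=-S$), and they do collapse, with left and right roles transposed exactly as you note, to the three $A_R$-bimodule axioms for $\widetilde{M}$ and to the two conditions making $(\widetilde{M},S)$ a bimodule over $(A_R,R)$. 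Since both ingredient propositions precede the statement in the paper, there is no circularity. What your approach buys is conceptual economy — all cancellations of nested $S$-terms and $\kappa$-terms are absorbed into results already proved, so the new proof is a few lines of evaluation; what the paper's direct computation buys is self-containedness and explicit formulas, at the cost of length. If you write yours up, do include the short computation of the identification and the evaluation at $\mathrm{id}_M$ explicitly, since that transposition of sides is the one place a reader could stumble.
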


\begin{proof}
For $a, b \in A_R$ and $u \in M$, we have
\begin{align*}
(a \ast_R b) ~ \widetilde{\cdot}~ u 
=~& R (a \ast_R b ) \cdot u - S \big(  (a \ast_R b) \cdot u \big) \\
=~& \big( R(a) \cdot R(b) - \kappa ~ \! a \cdot b  \big) \cdot u - S \big( (R(a) \cdot b + a \cdot R(b)) \cdot u  \big) \\
=~& R(a) \cdot R(b) \cdot u - \kappa ~ \! a \cdot b \cdot u - S \big(  R(a) \cdot b \cdot u + a \cdot S(b \cdot u) \big) - S \big( a \cdot R(b) \cdot u - a \cdot S(b \cdot u)  \big) \\
=~& R(a) \cdot (R(b) \cdot u) - R(a) \cdot S(b \cdot u) - S \big( a \cdot R(b) \cdot u - a \cdot S(b \cdot u)  \big) \\
=~& R(a) \cdot \big( R (b) \cdot u - S(b \cdot u) \big) - S \big(    a \cdot (R(b) \cdot u - S(b \cdot u)) \big) \\
=~& a ~\widetilde{\cdot} ~ \big( R(b) \cdot u - S(b \cdot u) \big) \\
=~& a ~\widetilde{\cdot} ~ (b ~ \widetilde{\cdot} ~ u).
\end{align*}
Similarly,
\begin{align*}
(a ~ \widetilde{\cdot} ~ u) ~ \widetilde{\cdot} ~ b =~& (a ~ \widetilde{\cdot} ~ u) \cdot R(b) - S \big( (a ~ \widetilde{\cdot} ~ u) \cdot b  \big) \\
=~& \big(  R(a) \cdot u - S (a \cdot u) \big) \cdot R(b) - S \big( ( R(a) \cdot u - S(a \cdot u) ) \cdot b  \big) \\
=~& R(a) \cdot u \cdot R(b) - S \big( S(a\cdot u) \cdot b + a \cdot u \cdot R(b)  \big) - \kappa ~ \! a \cdot u \cdot b - S \big( R(a) \cdot u \cdot b  \big) + S\big(  S(a \cdot u) \cdot b \big)\\
=~& R(a) \cdot u \cdot R(b) - S \big( R(a) \cdot u \cdot b + a \cdot S(u \cdot b)  \big) - \kappa ~ \! a \cdot u \cdot b - S \big( a \cdot u \cdot R(b)  \big) + S \big( a \cdot S(u \cdot b)  \big) \\
=~& R(a) \cdot \big( u \cdot R(b) - S (u \cdot b)  \big) - S \big( a \cdot (u \cdot R(b) - S(u \cdot b))  \big) \\
=~& R(a) \cdot (u ~ \widetilde{\cdot}~ b) - S \big( a \cdot (u ~ \widetilde{\cdot} ~ b) \big) \\
=~& a ~ \widetilde{\cdot} ~ (u ~ \widetilde{\cdot} ~ b)
\end{align*}
and
\begin{align*}
(u ~ \widetilde{\cdot} ~ a ) ~ \widetilde{\cdot} ~ b =~& (u ~ \widetilde{\cdot} ~ a ) \cdot R(b) - S \big( (u ~ \widetilde{\cdot} ~ a) \cdot b \big) \\
=~& \big( u \cdot R(a) - S(u \cdot a) \big) \cdot R(b) - S \big( (u \cdot R(a) - S(u \cdot a)) \cdot b  \big) \\
=~& u \cdot R(a) \cdot R(b) - S(u \cdot a) \cdot R(b) - S \big( u \cdot R(a) \cdot b  \big) + S \big( S(u \cdot a) \cdot b  \big) \\
=~& u \cdot R(a) \cdot R(b) - S \big( S(u \cdot a) \cdot b + u \cdot a \cdot R(b)  \big) - \kappa ~ \! u \cdot a \cdot b - S \big( u \cdot R(a) \cdot b  \big) + S \big( S(u \cdot a) \cdot b  \big) \\
=~& u \cdot \big( R(a) \cdot R(b) - \kappa~ \! a \cdot b  \big) - S \big(  u \cdot R(a) \cdot b + u \cdot a \cdot R(b) \big) \\
=~& u \cdot R(a \ast_R b) - S \big( u \cdot (a \ast_R b)  \big) \\
=~& u ~ \widetilde{\cdot} ~ (a \ast_R b).
\end{align*}
This shows that $\widetilde{M}$ is an $A_R$-bimodule. For the second part, we observe that
\begin{align*}
R(a) ~ \widetilde{\cdot} ~ S(u) =~& R^2(a) \cdot S(u) - S \big(  R(a) \cdot S(u) \big) \\
=~& S \big( R^2(a) \cdot u + R(a) \cdot S(u) \big) + \kappa ~ \! R(a) \cdot u - S^2 \big( R(a) \cdot u + a \cdot S(u)  \big) - \kappa ~ \! S (a \cdot u) \\
=~& S \big( R^2(a) \cdot u - S (R(a) \cdot u) + R(a) \cdot S(u) - S (a \cdot S(u))  \big) + \kappa ~ \! \big( R(a) \cdot u - S (a \cdot u)  \big) \\
=~& S \big(  R(a) ~ \widetilde{\cdot} ~ u + a ~ \widetilde{\cdot} ~ S(u) \big) + \kappa ~ \! a ~ \widetilde{\cdot} ~ u
\end{align*}
and
\begin{align*}
S(u) ~ \widetilde{\cdot} ~ R(a) =~& S(u) \cdot R^2(a) - S \big( S(u) \cdot R(a)  \big) \\
=~& S \big( S(u) \cdot R(a) + u \cdot R^2 (a)  \big) + \kappa ~ \! u \cdot R(a) -S^2 \big(  S(u) \cdot a + u \cdot R(a) \big) - \kappa ~ \! S (u \cdot a) \\
=~& S \big( S(u) \cdot R(a) - S (S(u) \cdot a) + u \cdot R^2(a) - S (u \cdot R(a))  \big) + \kappa ~ \! \big( u \cdot R(a) - S (u \cdot a)  \big) \\
=~& S \big( S(u) ~ \widetilde{\cdot} ~ a + u ~ \widetilde{\cdot} ~ R(a)  \big) + \kappa ~ \! u ~ \widetilde{\cdot} ~ a,
\end{align*}
which shows that $(\widetilde{M}, S)$ is a bimodule over the modified Rota-Baxter algebra $(A_R, R)$ of weight $\kappa$. 
\end{proof}

\section{Cohomology of modified Rota-Baxter algebras}\label{sec-3}
In this section, we introduce the cohomology of a modified Rota-Baxter algebra $(A, R)$ with coefficients in a bimodule $(M,S)$. This cohomology is obtained as a biproduct of the Hochschild cohomology of the underlying associative algebra $A$ and the Hochschild cohomology of the induced associative algebra $A_R$. We also compare our cohomology of a modified Rota-Baxter algebra with the cohomology of a Rota-Baxter algebra defined in \cite{DasSK,wang-zhou}.

Let $(A, R)$ be a modified Rota-Baxter algebra of weight $\kappa$, and $(M,S)$ be a bimodule over it. First, consider the Hochschild cochain complex of $A$ with coefficients in the $A$-bimodule $M$. The complex is given by $\{ C^\bullet (A, M), \delta_\mathrm{Hoch} \}$, where $C^n (A, M) = \mathrm{Hom} (A^{\otimes n}, M)$ for $n \geq 0$, and the coboundary map $\delta_\mathrm{Hoch} : C^n (A, M) \rightarrow C^{n+1} (A, M)$ is given by
\begin{align*}
(\delta_\mathrm{Hoch} f) (a_1, \ldots, a_{n+1}) =~& (-1)^{n+1} ~ a_1 \cdot f (a_2, \ldots, a_{n+1}) + f(a_1, \ldots, a_n) \cdot a_{n+1} \\
~&+ \sum_{i=1}^n (-1)^{i+n+1} ~ f (a_1, \ldots, a_i \cdot a_{i+1}, \ldots , a_{n+1}),
\end{align*}
for $f \in C^n (A, M)$ and $a_1, \ldots, a_{n+1} \in A$. We denote the corresponding cohomology groups by $H^\bullet (A, M)$. Second,  we consider the Hochschild cochain complex $\{ C^\bullet (A_R, \widetilde{M}), \widetilde{\delta}_\mathrm{Hoch} \}$ of the induced associative algebra $A_R$ with coefficients in the $A_R$-bimodule $\widetilde{M}$, where $C^n (A_R, \widetilde{M}) = \mathrm{Hom} (A^{\otimes n}, M)$ for $n \geq 0$, and the coboundary map $\widetilde{\delta}_\mathrm{Hoch} : C^n (A_R, \widetilde{M}) \rightarrow C^{n+1} (A_R, \widetilde{M})$ is given by
\begin{align*}
(\widetilde{\delta}_\mathrm{Hoch} f)(a_1, \ldots, a_{n+1} ) =~& (-1)^{n+1} R(a_1) \cdot f(a_2, \ldots, a_{n+1}) - (-1)^{n+1} ~S \big( a_1 \cdot f(a_2, \ldots, a_{n+1}) \big) \\
&+ f (a_1, \ldots, a_n) \cdot R(a_{n+1}) - S \big( f(a_1, \ldots, a_n) \cdot a_{n+1} \big) \\
&+ \sum_{i=1}^n (-1)^{i+n+1}~ f \big( a_1, \ldots, R(a_i) \cdot a_{i+1} + a_i \cdot R(a_{i+1}), \ldots, a_{n+1}  \big),
\end{align*}
for $f \in C^n (A_R, \widetilde{M})$ and $a_1, \ldots, a_{n+1} \in A$. The cohomology groups of this cochain complex are denoted by $H^\bullet_\mathrm{Hoch} (A_R, \widetilde{M})$.

\begin{remark}
Let $(M, S) = (A, R)$ be the adjoint bimodule over the modified Rota-Baxter algebra $(A, R)$ of weight $\kappa$. Then the cochain complex constructed above is given by $\{ C^\bullet (A_R, \widetilde{A}), \widetilde{\delta}_\mathrm{Hoch} \}$, which is the Hochschild cochain complex of the induced algebra $A_R$ with coefficients in the $A_R$-bimodule $\widetilde{A}$. It can be easily check that the corresponding cohomology governs the formal deformation theory of the modified Rota-Baxter operator $R$ keeping the underlying associative structure on $A$ intact. In the following, we will be interested in a more general cohomology that simultaneously controls the formal deformations of the underlying algebra and the modified Rota-Baxter operator.
\end{remark}

Let $(A,R)$ be a modified Rota-Baxter algebra of weight $\kappa$, and $(M,S)$ be a bimodule over it. Our construction of the general cohomology is based on the following result. This is highly motivated by the similar result in the context of Rota-Baxter algebras \cite[Proposition 5.2]{wang-zhou}. The proof can be done by considering the correspondence between Rota-Baxter algebras and modified Rota-Baxter algebras, and using the above mentioned reference.

\begin{proposition}
Let $(A, R)$ be a modified Rota-Baxter algebra of weight $\kappa$, and $(M,S)$ be a bimodule.  Then the collection of maps $\{ \Psi^n : C^n (A, M) \rightarrow C^n (A_R, \widetilde{M}) \}_{n \geq 0}$ given by
\begin{align*}
\Psi^0 =~& \mathrm{id}_M, ~ \text{ and } \\
\Psi^n (f) (a_1, \ldots, a_n ) =~& f \big( R(a_1), \ldots, R(a_n) \big) \\
~&- \sum_{1 \leq i_1 < \cdots < i_r \leq n , , r \text{ odd }} (- \kappa)^{\frac{r-1}{2}} S \circ f \big( R(a_1), \ldots, a_{i_1}, \ldots, a_{i_2}, \ldots, a_{i_r}, \ldots, R(a_n)  \big) \\
~& - \sum_{1 \leq i_1 < \cdots < i_r \leq n , , r \text{ even }} (- \kappa)^{\frac{r}{2} + 1} S \circ f \big( R(a_1), \ldots, a_{i_1}, \ldots, a_{i_2}, \ldots, a_{i_r}, \ldots, R(a_n)  \big)
\end{align*}
defines a morphism of cochain complexes from $\{ C^\bullet (A, M), \delta_\mathrm{Hoch} \}$ to $\{ C^\bullet (A_R, \widetilde{M}), \widetilde{\delta}_\mathrm{Hoch} \}$.
\end{proposition}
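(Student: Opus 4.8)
The plan is to deduce the statement from its Rota-Baxter counterpart \cite[Proposition 5.2]{wang-zhou} via the correspondence between Rota-Baxter and modified Rota-Baxter algebras recalled above. First I would reduce to the case $\kappa=-\lambda^{2}$. Indeed, the cochain groups $C^{\bullet}(A,M)$ and $C^{\bullet}(A_{R},\widetilde{M})$, the differentials $\delta_{\mathrm{Hoch}}$, $\widetilde{\delta}_{\mathrm{Hoch}}$, and the maps $\Psi^{n}$ are all given by formulas compatible with extension of scalars along any field extension ${\bf k}\subseteq{\bf K}$ of characteristic $0$, and $C^{n}(A,M)$ embeds into $C^{n}(A\otimes_{\bf k}{\bf K},\,M\otimes_{\bf k}{\bf K})$. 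Hence it suffices to prove $\Psi^{n+1}\circ\delta_{\mathrm{Hoch}}=\widetilde{\delta}_{\mathrm{Hoch}}\circ\Psi^{n}$ after passing to an algebraic closure of ${\bf k}$, and then we may assume $\kappa=-\lambda^{2}$ for some $\lambda\in{\bf k}$ (with $\lambda=0$ if $\kappa=0$).

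Next I would translate the data. Set $P:=\frac{1}{2}(R-\lambda\,\mathrm{id}_{A})$ and $Q:=\frac{1}{2}(S-\lambda\,\mathrm{id}_{M})$. By the Lemma relating the two notions of operator, together with the converse of the Proposition relating their bimodules --- both converses being valid because the relevant computations are reversible over a field in which $2$ is invertible --- the pair $(A,P)$ is a Rota-Baxter algebra of weight $\lambda$ and $(M,Q)$ is a bimodule over it. From $R(a)\cdot b+a\cdot R(b)=2\big(P(a)\cdot b+a\cdot P(b)+\lambda\, a\cdot b\big)$ one sees that the product $\ast_{R}$ on $A_{R}$ is twice the associative product $\ast_{P}$ induced on the Rota-Baxter side; likewise $a\,\widetilde{\cdot}\,u=R(a)\cdot u-S(a\cdot u)=2\big(P(a)\cdot u-Q(a\cdot u)\big)$, so $\widetilde{M}$ is, up to the same rescaling by $2$, the $A_{P}$-bimodule used to define the cohomology of $(A,P)$ in \cite{wang-zhou}. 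A termwise comparison of coboundary formulas then gives $\widetilde{\delta}_{\mathrm{Hoch}}=2\,\delta^{A_{P}}_{\mathrm{Hoch}}$, where $\delta^{A_{P}}_{\mathrm{Hoch}}$ is the Hochschild differential occurring in \cite[Proposition 5.2]{wang-zhou}.

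It remains to identify $\Psi^{\bullet}$ itself and to conclude. Expanding $f(R(a_{1}),\dots,R(a_{n}))$ multilinearly using $R=\lambda\,\mathrm{id}_{A}+2P$, expanding each term $S\circ f(\dots)$ using $S=\lambda\,\mathrm{id}_{M}+2Q$ and $R=\lambda\,\mathrm{id}_{A}+2P$, and regrouping the result according to the subset of arguments on which $P$ is \emph{not} applied, one checks that $\Psi^{n}=2^{n}\,\Phi^{n}$, where $\Phi^{\bullet}$ is the cochain map of \cite[Proposition 5.2]{wang-zhou} attached to $(A,P)$ and $(M,Q)$; the binomial factors produced by the multilinear expansions recombine with $-\kappa=\lambda^{2}$ into exactly the coefficients $(-\kappa)^{(r-1)/2}$ when that subset has odd size $r$ and $(-\kappa)^{r/2+1}$ when it has even size $r$. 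Granting this, since $\Phi^{\bullet}$ is a morphism of cochain complexes we obtain
\[
\Psi^{n+1}(\delta_{\mathrm{Hoch}}f)=2^{n+1}\Phi^{n+1}(\delta_{\mathrm{Hoch}}f)=2^{n+1}\,\delta^{A_{P}}_{\mathrm{Hoch}}(\Phi^{n}f)=2\,\delta^{A_{P}}_{\mathrm{Hoch}}(\Psi^{n}f)=\widetilde{\delta}_{\mathrm{Hoch}}(\Psi^{n}f),
\]
which is the desired identity.

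The main obstacle is this last identification $\Psi^{n}=2^{n}\Phi^{n}$: one must verify that the coefficients $(-\kappa)^{(r-1)/2}$ and $(-\kappa)^{r/2+1}$ in the definition of $\Psi^{n}$ are precisely what the substitution $R=\lambda\,\mathrm{id}_{A}+2P$, $S=\lambda\,\mathrm{id}_{M}+2Q$ produces out of Wang--Zhou's cochain map, which comes down to a careful bookkeeping of how the various powers of $\lambda$ and of $2$ recombine (the $2$'s being absorbed into the degreewise factor $2^{n}$ and into $\widetilde{\delta}_{\mathrm{Hoch}}=2\,\delta^{A_{P}}_{\mathrm{Hoch}}$, the even powers of $\lambda$ into powers of $-\kappa$). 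Alternatively, one can bypass \cite{wang-zhou} altogether and verify $\Psi^{n+1}\circ\delta_{\mathrm{Hoch}}=\widetilde{\delta}_{\mathrm{Hoch}}\circ\Psi^{n}$ by directly expanding both sides, invoking the modified Rota-Baxter operator identity for the terms in which two copies of $R$ meet --- these account for the $\kappa$-corrections --- and the two defining identities of the bimodule $(M,S)$ for the corresponding terms built from $S$; this route is self-contained but considerably longer.
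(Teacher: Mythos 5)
Your strategy is exactly the one the paper itself indicates (its ``proof'' is the single sentence preceding the statement, deferring to the Rota--Baxter/modified Rota--Baxter correspondence and \cite[Proposition 5.2]{wang-zhou}), and several of your intermediate points are correct and are details the paper omits: the scalar-extension reduction to $\kappa=-\lambda^{2}$, the identities $a\ast_{R}b=2\,(a\ast_{P}b)$ and $a~\widetilde{\cdot}~u=2\big(P(a)\cdot u-Q(a\cdot u)\big)$, and hence $\widetilde{\delta}_{\mathrm{Hoch}}=2\,\overline{\delta}_{\mathrm{Hoch}}$; also, given $\Psi^{0}=\Phi^{0}=\mathrm{id}_{M}$, the scaling $2^{n}$ is indeed the only one compatible with the chain-map property.

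The gap is precisely the step you flag as ``the main obstacle'' and then assert without verification: the identification $\Psi^{n}=2^{n}\Phi^{n}$ with the coefficients as printed. It fails already for $n=2$. Substituting $P=\tfrac{1}{2}(R-\lambda\,\mathrm{id}_A)$, $Q=\tfrac{1}{2}(S-\lambda\,\mathrm{id}_M)$ into $2^{2}\Phi^{2}(f)(a,b)=4f(Pa,Pb)-4\lambda\,Qf(a,b)-4Qf(Pa,b)-4Qf(a,Pb)$ gives
\begin{align*}
4\,\Phi^{2}(f)(a,b)=f\big(R(a),R(b)\big)-S f\big(R(a),b\big)-S f\big(a,R(b)\big)-\kappa\, f(a,b),
\end{align*}
whereas the stated $\Psi^{2}(f)(a,b)$ has last term $-(-\kappa)^{2}\,S f(a,b)$: the two differ both in the power of $\kappa$ and in whether $S$ is applied, and no rescaling of that single term (keeping the shape $S\circ f(a,b)$) can repair the discrepancy, since matching the $f(a,b)$- and $Qf(a,b)$-components forces contradictory values of the coefficient unless $\lambda=0$. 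In fact the printed formula is not a cochain map at all: a direct computation of $\Psi^{2}(\delta_{\mathrm{Hoch}}f)-\widetilde{\delta}_{\mathrm{Hoch}}(\Psi^{1}f)$, using the modified Rota--Baxter identity and the two bimodule axioms, leaves the remainder $\kappa\,(\delta_{\mathrm{Hoch}}f)(a,b)-(-\kappa)^{2}\,S\big((\delta_{\mathrm{Hoch}}f)(a,b)\big)$, which is nonzero for instance when $R=S=\mathrm{id}$, $\kappa=-1$, $f=\mathrm{id}_{A}$ on any algebra with nonzero product; with the corrected last term $-\kappa f(a,b)$ (which is also what the paper's own displayed $2$-cocycle equations use) the remainder vanishes. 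So your argument, carried out honestly, proves the proposition for $\Psi^{n}:=2^{n}\Phi^{n}$ rewritten in terms of $R,S,\kappa$, a formula which for $n\geq 2$ does not coincide with the one in the statement; as written, the claimed coefficient bookkeeping is the step that fails, and your write-up needs either to carry out that expansion and record the coefficients it actually produces, or to fall back on the direct verification you mention at the end.
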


The above proposition suggests to consider a new cochain complex $\{ C^\bullet_\mathrm{mRBA} ((A,R), (M,S)), \delta_\mathrm{mRBA} \}$, where
\begin{align*}
C^n_\mathrm{mRBA} ((A,R), (M,S)) = \begin{cases} M & \text{ if } n =0,\\
C^n (A, M) \oplus C^{n-1}(A_R,\widetilde{M}) = \mathrm{Hom} (A^{\otimes n}, M) \oplus \mathrm{Hom}(A^{\otimes n-1}, {M}) & \text{ if } n \geq 1.
\end{cases}
\end{align*}
The coboundary operator $\delta_\mathrm{mRBA} : C^n_\mathrm{mRBA} ((A,R), (M,S)) \rightarrow C^{n+1}_\mathrm{mRBA} ((A,R), (M,S))$ is given by
\begin{align*}
\delta_\mathrm{mRBA} (u) =~& (\delta_\mathrm{Hoch}(u), -u), ~\text{ for } u \in M,\\
\delta_\mathrm{mRBA} (\chi, \Phi) =~& \big( \delta_\mathrm{Hoch} (\chi), ~ - \widetilde{\delta}_\mathrm{Hoch} (\Phi) - \Psi^n (\chi)  \big), ~\text{ for } (\chi, \Phi) \in C^n_\mathrm{mRBA} ((A, R), (M,S)).
\end{align*}
Note that $(\delta_\mathrm{mRBA})^2 = 0$ as we have
\begin{align*}
(\delta_\mathrm{mRBA})^2 (u) =~& \delta_\mathrm{mRBA} ( \delta_\mathrm{Hoch}(u), -u ) \\
=~& \big( (\delta_\mathrm{Hoch})^2(u), ~ \widetilde{\delta}_\mathrm{Hoch} (u) - \Psi^1 \circ \delta_\mathrm{Hoch}(u)  \big) = 0
\end{align*}
and 
\begin{align*}
(\delta_\mathrm{mRBA})^2 (\chi, \Phi) =~& \delta_\mathrm{mRBA} \big( \delta_\mathrm{Hoch} (\chi), ~ - \widetilde{\delta}_\mathrm{Hoch} (\Phi) - \Psi^n (\chi)   \big) \\
=~& \big( (\delta_\mathrm{Hoch})^2 (\chi), ~ (\widetilde{\delta}_\mathrm{Hoch})^2 (\Phi) + \widetilde{\delta}_\mathrm{Hoch} \circ \Psi^n (\chi) - \Psi^{n+1} \circ \delta_\mathrm{Hoch} (\chi)  \big) = 0.
\end{align*}

It is clear from the above description that a pair $(\chi, \Phi) \in \mathrm{Hom}(A^{\otimes 2}, M) \oplus \mathrm{Hom}(A, M)$ is a $2$-cocycle if they satisfy
\begin{align*}
\begin{cases}
a \cdot \chi (b, c) - \chi (a \cdot b, c) + \chi (a, b \cdot c) - \chi (a, b) \cdot c = 0, \\\\
R(a) \cdot \Phi (b) - S (a \cdot \Phi (b)) - \Phi \big( R(a) \cdot b + a \cdot R(b) \big) + \Phi (a) \cdot R (b) - S (\Phi (a) \cdot b) \\
 \quad + \chi (R(a) , R(b)) - \kappa ~ \! \chi (a, b) - R \big( \chi(R(a), b) + \chi (a, R(b))  \big) = 0.
\end{cases}
\end{align*}
Further, it is a $2$-coboundary if there exists a pair $(\varphi_1, u) \in \mathrm{Hom}(A, M) \oplus M$ such that 
\begin{align*}
\chi = \delta_\mathrm{Hoch} (\varphi_1) ~~~ \text{ and } ~~~ \Phi = - \widetilde{\delta}_\mathrm{Hoch}(u) - \Psi^1 (\varphi_1).
\end{align*}

\medskip

Let $Z^n_\mathrm{mRBA} ((A,R), (M,S))$ denote the space of $n$-cocycles, and $B^n_\mathrm{mRBA} ((A,R),(M,S))$ denote the space of $n$-coboundaries. Then we have $B^n_\mathrm{mRBA} ((A,R),(M,S)) \subset Z^n_\mathrm{mRBA} ((A,R), (M,S))$, for $n \geq 0$. The quotient groups
\begin{align*}
H^n_\mathrm{mRBA} ((A,R), (M,S)) := \frac{Z^n_\mathrm{mRBA}  ((A,R), (M,S))}{B^n_\mathrm{mRBA} ((A,R), (M,S))}, \text{ for } n \geq 0
\end{align*}
are called the cohomology of the modified Rota-Baxter algebra $(A, R)$ of weight $\kappa$ with coefficients in the bimodule $(M,S)$.

\medskip

It is easy to see that there is a short exact sequence
\begin{align*}
0 \rightarrow \big\{ C^{\bullet -1} (A_R, \widetilde{M}), \widetilde{\delta}_\mathrm{Hoch} \big\} \xrightarrow{\iota} \big\{ C^\bullet_\mathrm{mRBA} ((A, R), (M,S)), \delta_\mathrm{mRBA}  \big\} \xrightarrow{p} \big\{ C^\bullet (A, M) , \delta_\mathrm{Hoch}  \big\} \rightarrow 0
\end{align*}
of cochain complexes, where the maps $\iota$ and $p$ are given by $\iota (\Phi) = (0, (-1)^{n-1} \Phi)$, for $\Phi \in C^{n-1} (A_R, \widetilde{M})$ and $p (\chi, \Phi) = \chi$, for $(\chi, \Phi) \in C^n_\mathrm{mRBA} ((A,R), (M,S))$. As a consequence, we obtain the following.

\begin{theorem}
Let $(A, R)$ be a modified Rota-Baxter algebra of weight $\kappa$, and $(M,S)$ be a bimodule over it. Then there is a long exact sequence connecting various cohomology groups
\begin{align*}
0 \rightarrow H^0_\mathrm{mRBA} ((A, R), (M,S)) \rightarrow H^0_\mathrm{Hoch} (A,M) \rightarrow H^0_\mathrm{Hoch} (A_R, \widetilde{M}) \rightarrow H^1_\mathrm{mRBA}((A,R), (M,S)) \rightarrow \cdots.
\end{align*}
\end{theorem}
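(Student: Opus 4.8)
The plan is to derive the long exact sequence as the standard one associated with the short exact sequence of cochain complexes displayed just before the theorem statement, namely
\[
0 \rightarrow \big\{ C^{\bullet -1} (A_R, \widetilde{M}), \widetilde{\delta}_\mathrm{Hoch} \big\} \xrightarrow{\iota} \big\{ C^\bullet_\mathrm{mRBA} ((A, R), (M,S)), \delta_\mathrm{mRBA}  \big\} \xrightarrow{p} \big\{ C^\bullet (A, M) , \delta_\mathrm{Hoch}  \big\} \rightarrow 0.
\]
So the first step is to verify that $\iota$ and $p$ are indeed chain maps and that the sequence is exact in each degree. For $p$ this is immediate, since $p(\delta_\mathrm{mRBA}(\chi,\Phi)) = \delta_\mathrm{Hoch}(\chi)$ by definition of $\delta_\mathrm{mRBA}$, and $p$ is visibly surjective in each degree with kernel $\{(0,\Phi)\}$. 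For $\iota$ I would check that $\iota$ commutes with the differentials up to the sign $(-1)^{n-1}$: using $\delta_\mathrm{mRBA}(0,\Phi) = (0, -\widetilde{\delta}_\mathrm{Hoch}(\Phi))$, the sign $(-1)^{n-1}$ inserted in the definition of $\iota$ is exactly what is needed to turn $-\widetilde{\delta}_\mathrm{Hoch}$ into $+\widetilde{\delta}_\mathrm{Hoch}$ after accounting for the shift in degree, so $\iota$ is a chain map; injectivity is clear, and $\mathrm{im}\,\iota = \ker p$ by inspection. Thus we have an honest short exact sequence of cochain complexes.

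The second step is to invoke the standard homological algebra fact that a short exact sequence of cochain complexes $0 \to C' \to C \to C'' \to 0$ induces a long exact sequence in cohomology
\[
\cdots \to H^n(C') \to H^n(C) \to H^n(C'') \xrightarrow{\partial} H^{n+1}(C') \to \cdots,
\]
with connecting homomorphism $\partial$ defined by the usual zig-zag (snake) construction. Applying this with $C' = C^{\bullet-1}(A_R,\widetilde{M})$, $C = C^\bullet_\mathrm{mRBA}((A,R),(M,S))$, and $C'' = C^\bullet(A,M)$, and using the notation $H^\bullet_\mathrm{Hoch}(A,M) = H^\bullet(A,M)$ and $H^\bullet_\mathrm{Hoch}(A_R,\widetilde{M})$ for the cohomology of the two Hochschild complexes (with the degree shift absorbed into $C'$, so that $H^n(C') = H^{n-1}_\mathrm{Hoch}(A_R,\widetilde{M})$), we obtain precisely the asserted sequence
\[
0 \rightarrow H^0_\mathrm{mRBA} ((A, R), (M,S)) \rightarrow H^0_\mathrm{Hoch} (A,M) \rightarrow H^0_\mathrm{Hoch} (A_R, \widetilde{M}) \rightarrow H^1_\mathrm{mRBA}((A,R), (M,S)) \rightarrow \cdots,
\]
where the leftmost $0$ comes from the fact that $C^{-1}(A_R,\widetilde{M}) = 0$, so $H^0_\mathrm{mRBA}$ injects into $H^0_\mathrm{Hoch}(A,M)$.

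There is essentially no obstacle here: the only thing requiring care is the bookkeeping of signs and degree shifts in $\iota$, to make sure the shifted complex $C^{\bullet-1}(A_R,\widetilde{M})$ really sits inside $C^\bullet_\mathrm{mRBA}$ as a subcomplex rather than merely as a graded subspace — and this is exactly what the sign $(-1)^{n-1}$ in the definition of $\iota$ is engineered to guarantee, as one checks in one line. I would therefore present the proof as: (1) note $\iota, p$ are chain maps and the sequence is short exact (one sentence each, the sign check being the only content); (2) take the associated long exact sequence in cohomology and identify the terms via the definitions of $C^\bullet_\mathrm{mRBA}$ and the two Hochschild complexes; (3) observe that the sequence begins with $0$ because the shifted complex vanishes in degree $0$. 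Everything else is the general machinery of the long exact cohomology sequence, which may be cited.
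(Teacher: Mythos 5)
Your proposal is correct and follows exactly the route the paper takes: the paper establishes the same short exact sequence of cochain complexes (with the same maps $\iota$ and $p$, including the sign $(-1)^{n-1}$) and then obtains the long exact sequence "as a consequence" via the standard snake-lemma machinery. Your verification of the chain-map property of $\iota$ and the identification of the degree shift are just the details the paper leaves implicit, so there is nothing missing and nothing genuinely different.
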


\medskip

\noindent {\bf A relation with the cohomology of Rota-Baxter algebras.} In \cite{DasSK,wang-zhou} the authors introduced the cohomology of Rota-Baxter algebras. Here we will show that the cohomology of a Rota-Baxter algebra $(A, P)$ of weight $\lambda$ is isomorphic to the cohomology of the modified Rota-Baxter algebra $(A, R = \lambda ~ \! \mathrm{id} + 2P)$ of weight $\kappa = -\lambda^2$.

We first recall the cohomology of Rota-Baxter algebras from \cite{wang-zhou}. Let $(A, P)$ be a Rota-Baxter algebra of weight $\lambda$. Then the vector space $A$ carries an associative algebra structure with the multiplication given by 
\begin{align*}
a \ast_P b := P(a) \cdot b ~+~ a \cdot P(b) + \lambda ~ \! a \cdot b, \text{ for } a, b \in A.
\end{align*}
We denote this associative algebra simply by $A_P$.
Moreover, if $(M, Q)$ is a bimodule over the Rota-Baxter algebra $(A, P)$ of weight $\lambda$, then the maps
\begin{align*}
A \otimes M \rightarrow M, ~(a, u) \mapsto P(a) \cdot u - Q (a \cdot u), \\
M \otimes A \rightarrow M, ~ (u, a) \mapsto u \cdot P(a) - Q (u \cdot a)
\end{align*}
defines an $A_P$-bimodule structure on $M$ (denoted by $\overline{M}$). Then the authors considered the cochain complex $\{ C^\bullet_\mathrm{RBA} ((A, P), (M, Q)), \delta_\mathrm{RBA} \}$, where
\begin{align*}
C^n_\mathrm{RBA} ((A, P), (M, Q)) = \begin{cases}
M & \text{ if } n \geq 0,\\
C^n (A, M) \oplus C^{n-1} (A_P, \overline{M}) = \mathrm{Hom}(A^{\otimes n}, M) \oplus \mathrm{Hom}(A^{\otimes n-1}, M) & \text{ if } n \geq 1.
\end{cases}
\end{align*}
The coboundary map is given by
\begin{align*}
\delta_\mathrm{RBA} (u) =~& \big( \delta_\mathrm{Hoch} (u), -u  \big), \\
\delta_\mathrm{RBA} (f, g) =~& \big( \delta_\mathrm{Hoch} (f), ~ - \overline{\delta}_\mathrm{Hoch} (g) - \Phi^n (f) \big),
\end{align*}
where $\overline{\delta}_\mathrm{Hoch}$ is the Hochschild coboundary operator of the associative algebra $A_P$ with coefficients in the $A_P$-bimodule $\overline{M}$, and $\{ \Phi^n : C^n(A, M) \rightarrow C^n (A_P, \overline{M}) \}_{n \geq 0}$ is the sequence of maps given by $\Phi^0 = \mathrm{id}_M$ and
\begin{align*}
\Phi^n (f) (a_1, \ldots, a_n ) = f \big( P(a_1), \ldots, P(a_n)  \big) - \sum_{k=0}^{n-1} \lambda^{n-k-1} \sum_{1 \leq i_1 < \cdots < i_k \leq n} ~ Q \circ f \big(  a_1, \ldots, P (a_{i_1}), \ldots, P(a_{i_k}), \ldots, a_n \big).
\end{align*}
The cohomology groups of the cochain complex $\{ C^\bullet_\mathrm{RBA} ((A, P), (M, Q)), \delta_\mathrm{RBA} \}$ are called the cohomology of the Rota-Baxter algebra $(A, P)$ with coefficients in the bimodule $(M, Q).$

\begin{theorem}
Let $(A, P)$ be a Rota-Baxter algebra of weight $\lambda$, and $(M, Q)$ be a bimodule over it. Then the cohomology of the Rota-Baxter algebra $(A, P)$ with coefficients in $(M, Q)$ is isomorphic to the cohomology of the modified Rota-Baxter algebra $(A, R = \lambda ~ \! \mathrm{id}_A + 2 P)$ with coefficients in the bimodule $(M, S = \lambda ~ \! \mathrm{id}_M + 2 Q)$.
\end{theorem}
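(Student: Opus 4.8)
The plan is to construct an explicit isomorphism of cochain complexes
\[
\Theta^\bullet : \big\{ C^\bullet_\mathrm{RBA}((A,P),(M,Q)), \delta_\mathrm{RBA} \big\} \longrightarrow \big\{ C^\bullet_\mathrm{mRBA}((A,R),(M,S)), \delta_\mathrm{mRBA} \big\},
\]
degree by degree, and then pass to cohomology. The key observation is that both complexes have the same underlying graded vector space in each degree, namely $M$ in degree $0$ and $\mathrm{Hom}(A^{\otimes n},M) \oplus \mathrm{Hom}(A^{\otimes n-1},M)$ in degree $n \geq 1$; moreover, by the Lemma relating Rota-Baxter and modified Rota-Baxter operators (with $R = \lambda\,\mathrm{id} + 2P$, $\kappa = -\lambda^2$) together with the Proposition on bimodules (giving $S = \lambda\,\mathrm{id}_M + 2Q$), the data $(A,P),(M,Q)$ and $(A,R),(M,S)$ genuinely determine each other. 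So the task is purely to match the two differentials.

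First I would compare the two induced associative algebras and their bimodules. A direct computation shows $a \ast_R b = R(a)\cdot b + a\cdot R(b) = (\lambda a + 2P(a))\cdot b + a\cdot(\lambda b + 2P(b)) = 2\lambda\, a\cdot b + 2(P(a)\cdot b + a\cdot P(b)) = 2\,(a\ast_P b)$, so $A_R$ and $A_P$ differ only by the scalar $2$ in the multiplication; similarly the $A_R$-actions $a\,\widetilde{\cdot}\,u = R(a)\cdot u - S(a\cdot u)$ expand to $2(P(a)\cdot u - Q(a\cdot u))$, i.e. twice the $A_P$-action on $\overline{M}$. This scalar discrepancy must be absorbed by the isomorphism: on $C^n(A_R,\widetilde M) = \mathrm{Hom}(A^{\otimes n-1},M)$ one should rescale by an appropriate power of $2$ (something like $2^{n-1}$, possibly with a sign), chosen precisely so that $\widetilde{\delta}_\mathrm{Hoch}$ matches $\overline{\delta}_\mathrm{Hoch}$ after conjugation. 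Thus I would define $\Theta^n(f,g) = (f, c_n\, g)$ on the $C^n(A,M) \oplus C^{n-1}(A_R,\widetilde M)$ summand for suitable constants $c_n$, leaving the Hochschild piece $C^n(A,M)$ untouched (the $A$-bimodule structure on $M$ is the same on both sides), and $\Theta^0 = \mathrm{id}_M$.

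The heart of the proof — and the main obstacle — is showing that the connecting terms match, i.e. that $\Psi^n = c_n^{-1}\, c_n \cdots$ relates correctly to $\Phi^n$ under this rescaling; concretely, one must verify an identity of the shape $c_{n+1}\,\Psi^n(\chi) = \Phi^n(\chi) \cdot(\text{scalar})$ modulo the rescaling, equivalently that $\Psi^n$ and $\Phi^n$ become proportional after substituting $R = \lambda\,\mathrm{id}+2P$ and $S = \lambda\,\mathrm{id}+2Q$. This is the combinatorial crux: one expands $f(R(a_1),\ldots,R(a_n)) = f(\lambda a_1 + 2P(a_1), \ldots)$ by multilinearity into $\sum 2^{|I|}\lambda^{n-|I|} f(\ldots, P(a_i), \ldots)$ over subsets $I$, and likewise expands each $S\circ f(\ldots)$ term in the definition of $\Psi^n$, then regroups. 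The binomial-type bookkeeping — tracking how the even/odd split with coefficients $(-\kappa)^{(r-1)/2}$ and $(-\kappa)^{r/2+1} = (-\lambda^2)^{\bullet}$ reorganizes into the single sum $\sum_k \lambda^{n-k-1}\sum Q\circ f(\ldots)$ appearing in $\Phi^n$ — is where all the work lies, and where the precise value of $c_n$ (and its sign) gets pinned down. Once the identity $\delta_\mathrm{mRBA}\circ\Theta^n = \Theta^{n+1}\circ\delta_\mathrm{RBA}$ is checked in all three components (the pure Hochschild one is immediate, the pure $\widetilde{\delta}_\mathrm{Hoch}$ one follows from the rescaling matching, and the mixed $\Psi^n$ vs $\Phi^n$ one is the computation above), the maps $\Theta^n$ are isomorphisms of vector spaces commuting with differentials, hence induce isomorphisms $H^n_\mathrm{RBA}((A,P),(M,Q)) \cong H^n_\mathrm{mRBA}((A,R),(M,S))$ for all $n$, as claimed. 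Alternatively, as the remark before the statement hints, one may shortcut the verification of the $\Psi^n/\Phi^n$ identity by invoking that the morphism $\Psi^\bullet$ was itself obtained from $\Phi^\bullet$ via the correspondence in \cite{wang-zhou}, so that compatibility is essentially built in; but an explicit check of the rescaling constants is still required to produce the isomorphism.
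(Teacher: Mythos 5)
Your proposal is correct and takes essentially the same route as the paper: the paper's proof is exactly such a degreewise rescaling, $\Theta_0(u)=\tfrac{u}{2}$ and $\Theta_n(f,g)=(f,\,2^{n-2}g)$, asserted without further detail to satisfy $\delta_\mathrm{mRBA}\circ\Theta_n=\Theta_{n+1}\circ\delta_\mathrm{RBA}$. The only difference is that you leave the constants $c_n$ to be forced by the chain-map conditions $c_{n+1}=2c_n$ and $c_{n+1}\Phi^n=\Psi^n$ (coming from $a\ast_R b=2(a\ast_P b)$, the doubled bimodule actions, and the $\Psi^n$ versus $\Phi^n$ expansion), which is precisely the verification the paper declares straightforward.
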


\begin{proof}
For each $n \geq 0$, we define an isomorphism of vector spaces
\begin{align*}
\Theta_n : C^n_\mathrm{RBA} ((A,P), (M, Q)) \rightarrow C^n_\mathrm{mRBA} ((A, R), (M, S))
\end{align*}
by
\begin{align*}
\Theta_0 (u) =~& \frac{u}{2}, ~\text{ for } u \in C^0_\mathrm{RBA} ((A,P), (M,Q)) = M, \\
\Theta_n (f, g) =~& (f,~ 2^{n-2} g), ~ \text{ for } (f, g) \in C^{n \geq 1}_\mathrm{RBA} ((A, P), (M, Q)).
\end{align*}
Then it is straightforward to verify that $\delta_\mathrm{mRBA} \circ \Theta_n = \Theta_{n+1} \circ \delta_\mathrm{RBA}$, for all $n \geq 0$. In other words, the collections $\{ \Theta_n \}_{n \geq 0}$ defines an isomorphism of cochain complexes from $\{ C^\bullet_\mathrm{RBA} ((A,P), (M,Q)), \delta_\mathrm{RBA} \}$ to $\{ C^\bullet_\mathrm{RBA} ((A,R), (M,S)), \delta_\mathrm{mRBA} \}$. Hence the result follows.
\end{proof}

\section{Deformations of modified Rota-Baxter algebras}\label{sec-4}
In this section, we study formal one-parameter deformations of modified Rota-Baxter algebras. In particular, we show that the infinitesimal in a formal deformation of a modified Rota-Baxter algebra $(A, R)$ is a $2$-cocycle in the cohomology complex of $(A,R)$ with coefficients in itself. Further, the vanishing of the second cohomology implies the rigidity of the modified Rota-Baxter algebra.

Let $(A, R)$ be a modified Rota-Baxter algebra of weight $\kappa$. Let $\mu$ denotes the associative multiplication on $A$, i.e. $\mu (a, b) = a \cdot b$, for $a, b \in A$. Consider the space $A[[t]]$ of formal power series in $t$ with coefficients from $A$. Then $A[[t]]$ is a $\mathbf{k}[[t]]$-module.

\begin{definition}
A {\bf formal one-parameter deformation} of the modified Rota-Baxter algebra $(A, R)$ of weight $\kappa$ consists of two formal power series of the form
\begin{align*}
&\mu_t = \sum_{i=0}^\infty \mu_i t^i, \text{ where } \mu_i \in \mathrm{Hom} (A^{\otimes 2}, A) \text{ with } \mu_0 = \mu,\\
&R_t = \sum_{i=0}^\infty R_i t^i, \text{ where } R_i \in \mathrm{Hom} (A, A) \text{ with } R_0 = R,
\end{align*}
such that the $\mathbf{k}[[t]]$-module $A[[t]]$ is an associative algebra with the $\mathbf{k}[[t]]$-bilinear multiplication $\mu_t$ and the $\mathbf{k}[[t]]$-linear map $R_t : A[[t]] \rightarrow A[[t]]$ is a modified Rota-Baxter operator of weight $\kappa$. In other words, $\big( A[[t]] = (A[[t]], \mu_t), R_t \big)$ is a modified Rota-Baxter algebra of weight $\kappa$. 

We often denote a formal one-parameter deformation as above by the pair $(\mu_t, R_t)$. 
\end{definition}

It follows that $(\mu_t, R_t)$ is a formal one-parameter deformation of the modified Rota-Baxter algebra $(A, R)$ of weight $\kappa$ if and only if the followings are hold:
\begin{align*}
\mu_t \big( \mu_t (a, b), c \big) =~& \mu_t \big( a, \mu_t(b,c)  \big), \\
\mu_t \big(  R_t (a), R_t (b) \big) =~& R_t \big(  \mu_t (R_t (a), b) + \mu_t (a, R_t(b)) \big) + \kappa ~ \mu_t (a, b),
\end{align*}
for all $a,b,c \in A$. By expanding these equations and comparing the coefficients of $t^n$ (for $n \geq 0$) in both sides, we obtain 
\begin{align*}
\sum_{i+j=n} \mu_i \big( \mu_j (a, b), c  \big) =~& \sum_{i+j = n} \mu_i \big(  a, \mu_j (b,c) \big), \\
\sum_{i+j+k = n} \mu_i \big( R_j (a), R_j (b)   \big) =~& \sum_{i+j+k = n} R_i \big( \mu_j (R_k (a), b) ~+~ \mu_j (a, R_k (b))  \big) + \kappa ~ \mu_n (a, b),
\end{align*}
for $n \geq 0$. The equations are obviously hold for $n = 0$ as $\mu$ is the associative multiplication on $A$ and the linear map $R: A \rightarrow A$ is a modified Rota-Baxter algebra of weight $\kappa$. However, for $n = 1$, we get
\begin{align}
\mu_1 (a \cdot b, c) + \mu_1 (a,b) \cdot c =~& \mu_1 (a, b \cdot c) + a \cdot \mu_1 (b, c), \label{inf-1-2}\\
R_1 (a) \cdot R(b) + R(a) \cdot R_1(b) + \mu_1 (R(a) , R(b)) =~& R_1 \big( R(a) \cdot b + a \cdot R(b)  \big) + R \big( \mu_1 (R(a), b) ~+~ \mu_1 (a, R(b))  \big) \label{inf-2-2} \\
~&+ R \big(  R_1(a) \cdot b + a \cdot R_1(b) \big) + \kappa ~ \mu_n (a, b), \nonumber
\end{align}
for all $a, b, c \in A$.
Note the the equation (\ref{inf-1-2}) is equivalent to $(\delta_\mathrm{Hoch} \mu_1) (a, b, c) = 0$ while the equation (\ref{inf-2-2}) is equivalent to 
\begin{align*}
\widetilde{\delta}_\mathrm{Hoch} (R_1) + \Psi^2 (\mu_1) = 0.
\end{align*}
Thus, it follows that
\begin{align}
\delta_\mathrm{mRBA} (\mu_1, R_1) = ( \delta_\mathrm{Hoch} (\mu_1), ~ - \widetilde{\delta}_\mathrm{Hoch} (R_1) - \Psi^2 (\mu_1)) = 0.
\end{align}
This shows that $(\mu_1, R_1)$ is a $2$-cocycle in the cohomology complex of $(A, R)$ with coefficients in itself. This is called the {\bf infinitesimal} of the formal one-parameter deformation $(\mu_t, R_t)$.

\begin{definition}
Let $(\mu_t, R_t)$ and $(\mu_t', R_t')$ be two formal one-parameter deformations of the modified Rota-Baxter algebra $(A, R)$ of weight $\kappa$. These two deformations are said to be {\bf equivalent} if there exists a formal isomorphism
\begin{align*}
\varphi_t = \sum_{i=0}^\infty \varphi_i t^i : \big( A[[t]] = (A[[t]], \mu_t), R_t  \big)  \rightarrow  \big( A[[t]]' = (A[[t]], \mu_t'), R_t'  \big) ~~ \text{ with } \varphi_0 = \mathrm{id}_A
\end{align*}
between modified Rota-Baxter algebras of weight $\kappa$, i.e.
\begin{align}\label{equiv-dend}
\varphi_t \big(   \mu_t (a, b) \big) = \mu_t' \big(  \varphi_t (a), \varphi_t(b) \big)  ~~~~ \text{ and } ~~~~ \varphi_t \circ R_t = R_t' \circ \varphi_t, ~ \text{ for } a, b \in A.
\end{align}
\end{definition}

By expanding the equations in (\ref{equiv-dend}) and comparing the coefficients of $t^n$ (for $n \geq 0$) in both sides, we obtain
\begin{align*}
\sum_{i+j=n} \varphi_i \big(  \mu_j (a, b) \big) =~& \sum_{i+j +k = n} \mu_i' \big(  \varphi_j (a), \varphi_k (b) \big),\\
\sum_{i+j = n} \varphi_i \circ R_j =~& \sum_{i+j = n} R_i' \circ \varphi_j,
\end{align*}
for $n \geq 0$. The equations are hold for $n=0$ as $\varphi_0 = \mathrm{id}_A$. However, for $n=1$, we get
\begin{align}
\mu_1 (a, b) + \varphi_1 (a \cdot b) =~& \mu_1' (a, b) + \varphi_1 (a) \cdot b + a \cdot \varphi_1 (b),  \label{mor-equiv-1}\\
R_1 + \varphi_1 \circ R =~& R_1' +  R \circ \varphi_1,  \label{mor-equiv-2}.
\end{align}
Note that the equation (\ref{mor-equiv-1}) is same as $\mu_1 - \mu_1' = \delta_\mathrm{Hoch} (\varphi_1)$, while the equation (\ref{mor-equiv-2}) is same as
\begin{align*}
R_1 - R_1' = - \Psi^1 (\varphi_1).
\end{align*}
Thus, we have
\begin{align*}
(\mu_1,R_1) - (\mu_1' , R_1') = \big( \delta_\mathrm{Hoch} (\varphi_1), ~ - \Psi^1 (\varphi_1)  \big) = \delta_\mathrm{mRBA} (\varphi_1, 0).
\end{align*}
As a consequence of the above discussions, we obtain the following.

\begin{theorem}
Let $(A, R)$ be a modified Rota-Baxter algebra of weight $\kappa$. Suppose $(\mu_t, R_t)$ is a formal one-parameter deformation of $(A, R).$ Then the infinitesimal is a $2$-cocycle in the cohomology complex of $(A, R)$ with coefficients in itself. Moreover, the corresponding cohomology class depends only on the equivalence class of the deformation.
\end{theorem}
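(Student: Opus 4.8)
The plan is to obtain both assertions by assembling the coefficient-wise analysis carried out in the discussion preceding the statement. For the first assertion, I would start from a formal one-parameter deformation $(\mu_t, R_t)$ and expand its two defining identities --- associativity of $\mu_t$ and the modified Rota-Baxter identity of weight $\kappa$ for $R_t$ --- as power series in $t$, then extract the coefficient of $t$. This produces exactly equations (\ref{inf-1-2}) and (\ref{inf-2-2}). The next step is to reinterpret these: equation (\ref{inf-1-2}) says $\delta_\mathrm{Hoch}(\mu_1) = 0$ directly from the formula for the Hochschild differential on a $2$-cochain, while (\ref{inf-2-2}), after moving the three terms that carry an outer $R$ to one side, is precisely $\widetilde{\delta}_\mathrm{Hoch}(R_1) + \Psi^2(\mu_1) = 0$ once one writes out $\widetilde{\delta}_\mathrm{Hoch}$ on the $1$-cochain $R_1$ (using the $A_R$-actions $a \,\widetilde{\cdot}\, u = R(a)\cdot u - S(a\cdot u)$ and $u \,\widetilde{\cdot}\, a = u\cdot R(a) - S(u\cdot a)$) together with the $r=1$ and $r=2$ terms of $\Psi^2(\mu_1)$. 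Combining, $\delta_\mathrm{mRBA}(\mu_1, R_1) = (\delta_\mathrm{Hoch}(\mu_1),\, -\widetilde{\delta}_\mathrm{Hoch}(R_1) - \Psi^2(\mu_1)) = 0$, so $(\mu_1, R_1)$ is a $2$-cocycle in $C^\bullet_\mathrm{mRBA}((A,R),(A,R))$.

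For the second assertion I would take two equivalent deformations $(\mu_t, R_t)$ and $(\mu_t', R_t')$ together with a formal isomorphism $\varphi_t = \mathrm{id}_A + \sum_{i\geq 1}\varphi_i t^i$ realizing the equivalence. Expanding the two conditions in (\ref{equiv-dend}) and comparing coefficients of $t$ yields (\ref{mor-equiv-1}) and (\ref{mor-equiv-2}); the former rearranges to $\mu_1 - \mu_1' = \delta_\mathrm{Hoch}(\varphi_1)$ and the latter to $R_1 - R_1' = -\Psi^1(\varphi_1)$, where $\Psi^1(\varphi_1)(a) = \varphi_1(R(a)) - S(\varphi_1(a))$ matches $R\circ\varphi_1 - \varphi_1\circ R$ after accounting for the sign conventions in $\delta_\mathrm{mRBA}$. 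Hence $(\mu_1, R_1) - (\mu_1', R_1') = (\delta_\mathrm{Hoch}(\varphi_1),\, -\Psi^1(\varphi_1)) = \delta_\mathrm{mRBA}(\varphi_1, 0)$, so the two infinitesimals differ by a coboundary and define the same class in $H^2_\mathrm{mRBA}((A,R),(A,R))$.

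The only delicate point, and hence the main thing to watch, is the bookkeeping needed to match the explicit formulas against the terms produced by the expansions: one must verify that $\widetilde{\delta}_\mathrm{Hoch}$ applied to a $1$-cochain, the maps $\Psi^1$ and $\Psi^2$, and the signs built into $\delta_\mathrm{mRBA}$ all line up exactly with (\ref{inf-2-2}) and (\ref{mor-equiv-2}) under the identification $C^n(A_R, \widetilde{A}) = \mathrm{Hom}(A^{\otimes n}, A)$. No conceptual obstacle arises beyond this matching of formulas, which is why the theorem follows directly from the preceding computations.
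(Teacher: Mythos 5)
Your proposal is correct and follows essentially the same route as the paper: the paper's proof is precisely the preceding coefficient-of-$t$ expansion of the deformation and equivalence identities, reinterpreted as $\delta_\mathrm{mRBA}(\mu_1,R_1)=0$ and $(\mu_1,R_1)-(\mu_1',R_1')=\delta_\mathrm{mRBA}(\varphi_1,0)$. The bookkeeping you flag (matching $\widetilde{\delta}_\mathrm{Hoch}$, $\Psi^1$, $\Psi^2$ and the signs in $\delta_\mathrm{mRBA}$ against equations (\ref{inf-2-2}) and (\ref{mor-equiv-2})) is exactly what the paper does, and it goes through as you describe.
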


We end this section by considering the rigidity of a modified Rota-Baxter algebra of weight $\kappa$. We also find a sufficient condition for rigidity.

\begin{definition}
A modified Rota-Baxter algebra $(A, R)$ is said to be {\bf rigid} if any formal one-parameter deformation $(\mu_t, R_t)$ is equivalent to the undeformed one $(\mu, R)$.
\end{definition}

\begin{theorem}
Let $(A, R)$ be a modified Rota-Baxter algebra of weight $\kappa$. If $H^2_\mathrm{mRBA} ((A,R), (A,R)) = 0$ then $(A,R)$ is rigid.
\end{theorem}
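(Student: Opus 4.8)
The strategy is the classical Gerstenhaber-style argument: assume the second cohomology vanishes and show that any formal one-parameter deformation can be trimmed, order by order, until it is reduced to the trivial deformation. Let $(\mu_t, R_t)$ be a formal one-parameter deformation of $(A,R)$, and suppose that its first nonzero higher term occurs in degree $n$, i.e. $\mu_i = 0$ and $R_i = 0$ for $1 \le i \le n-1$, while $(\mu_n, R_n)$ is not (necessarily) zero. The first step is to show that under this hypothesis the pair $(\mu_n, R_n)$ is a $2$-cocycle in $C^\bullet_\mathrm{mRBA}((A,R),(A,R))$; this is the exact analogue of the computation already carried out for the infinitesimal $(\mu_1, R_1)$, since the order-$n$ terms of the associativity and modified Rota-Baxter equations, with all intermediate terms vanishing, reduce to $\delta_\mathrm{Hoch}(\mu_n) = 0$ and $\widetilde{\delta}_\mathrm{Hoch}(R_n) + \Psi^2(\mu_n) = 0$, that is, $\delta_\mathrm{mRBA}(\mu_n, R_n) = 0$.

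The second step uses the hypothesis $H^2_\mathrm{mRBA}((A,R),(A,R)) = 0$: the cocycle $(\mu_n, R_n)$ must be a coboundary, so there exists $(\varphi_n, u) \in \mathrm{Hom}(A, A) \oplus A$ with $(\mu_n, R_n) = \delta_\mathrm{mRBA}(\varphi_n, u)$. In fact, since we are working with formal isomorphisms of the form $\varphi_t = \mathrm{id}_A + \sum_{i \ge n} \varphi_i t^i$, we only need the $\mathrm{Hom}(A,A)$-component, so it is enough to take $(\mu_n, R_n) = \delta_\mathrm{mRBA}(\varphi_n, 0) = (\delta_\mathrm{Hoch}(\varphi_n), -\Psi^1(\varphi_n))$. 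Then I would set $\varphi_t = \mathrm{id}_A - \varphi_n t^n$ (or $\mathrm{id}_A + \varphi_n t^n$ with the appropriate sign, matching the convention in equations (\ref{mor-equiv-1})--(\ref{mor-equiv-2})) and define a new deformation $(\mu_t', R_t')$ by transporting $(\mu_t, R_t)$ along $\varphi_t$, i.e. $\mu_t'(a,b) = \varphi_t(\mu_t(\varphi_t^{-1}(a), \varphi_t^{-1}(b)))$ and $R_t' = \varphi_t \circ R_t \circ \varphi_t^{-1}$. By Theorem (the equivalence theorem just proved), $(\mu_t', R_t')$ is a formal one-parameter deformation of $(A,R)$ equivalent to the original one, and a direct check of the order-$n$ coefficients shows $\mu_i' = 0$, $R_i' = 0$ for $1 \le i \le n-1$ and $(\mu_n', R_n') = (\mu_n, R_n) - \delta_\mathrm{mRBA}(\varphi_n, 0) = 0$. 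Thus the first nonzero higher term has been pushed from degree $n$ to degree at least $n+1$.

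The third step is to iterate: starting from $n = 1$, repeated application of the above produces, for every $N$, a deformation equivalent to $(\mu_t, R_t)$ whose terms up to order $N$ agree with the undeformed $(\mu, R)$. One then assembles these successive equivalences into a single formal isomorphism (taking the composite of the $\varphi_t$'s, which converges in the $t$-adic topology since the $n$-th factor is $\mathrm{id}_A$ modulo $t^n$) witnessing the equivalence of $(\mu_t, R_t)$ with $(\mu, R)$; hence $(A,R)$ is rigid.

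The step I expect to require the most care is the second one: verifying that transporting the deformation along $\varphi_t = \mathrm{id}_A + \varphi_n t^n$ indeed kills the degree-$n$ term without disturbing the lower-degree (trivial) terms. This is the point where one must be careful about signs and about the precise interaction between $\Psi^1$, $\delta_\mathrm{Hoch}$, and $\widetilde{\delta}_\mathrm{Hoch}$ in the coboundary formula $\delta_\mathrm{mRBA}(\varphi_n, 0) = (\delta_\mathrm{Hoch}(\varphi_n), -\Psi^1(\varphi_n))$; the relevant identities are exactly those already extracted in equations (\ref{mor-equiv-1})--(\ref{mor-equiv-2}), so the computation is the order-$n$ analogue of the order-$1$ computation preceding the equivalence theorem, and is essentially bookkeeping once the pattern is set up. The convergence/assembly argument in the third step is standard and poses no real difficulty.
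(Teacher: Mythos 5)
Your proposal is correct and follows essentially the same route as the paper: the standard Gerstenhaber-type induction in which the lowest-order nonzero term of a deformation is shown to be a $2$-cocycle, killed by a coboundary via a formal isomorphism $\varphi_t = \mathrm{id}_A + \varphi_n t^n$, and the procedure iterated. The only point you gloss over --- replacing a general $1$-cochain $(\varphi_n, u)$ by one of the form $(\varphi_n', 0)$ with the same coboundary --- is justified in the paper by the identity $\delta_\mathrm{mRBA}(\varphi_n + \delta_\mathrm{Hoch}(u), 0) = \delta_\mathrm{mRBA}(\varphi_n, u)$, which holds because $\Psi^1 \circ \delta_\mathrm{Hoch} = \widetilde{\delta}_\mathrm{Hoch}$ on $0$-cochains; your stated reason (that only the $\mathrm{Hom}(A,A)$-component is needed) should be replaced by this one-line verification.
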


\begin{proof}
Let $(\mu_t, R_t)$ be any formal one-parameter deformation of the modified Rota-Baxter algebra $(A, R)$ of weight $\kappa$. Then the infinitesimal $(\mu_1, R_1)$ is a $2$-cocycle. Thus, it follows from the hypothesis that
\begin{align}
(\mu_1, R_1) = \delta_\mathrm{mRBA} (\varphi_1, a), \text{ for some } (\varphi_1, a) \in C^1_\mathrm{mRBA}((A, R), (A, R)).
\end{align}
Note that 
\begin{align*}
\delta_\mathrm{mRBA} (\varphi_1 + \delta_\mathrm{Hoch} (a), 0) =~& \big(  \delta_\mathrm{Hoch}(\varphi_1), ~- \Psi^1 (\varphi_1) - \Psi^1 \circ \delta_\mathrm{Hoch}(a) \big) \\
=~& \big( \delta_\mathrm{Hoch}(\varphi_1), ~ - \Psi^1 (\varphi_1) - \widetilde{\delta}_\mathrm{Hoch} (a)  \big) = \delta_\mathrm{mRBA} (\varphi_1, a).
\end{align*}
Thus, without loss of any generality, we may assume that $(\mu_1, R_1) = \delta_\mathrm{mRBA} (\varphi_1, 0)$, for some $(\varphi_1, 0) \in C^1_\mathrm{mRBA} ((A,R), (A, R))$. Then we define a map $\varphi_t = \mathrm{id}_A + t \varphi_1 : A [[t]] \rightarrow A[[t]]$ and consider the pair
\begin{align*}
\big( \overline{\mu}_t = \varphi_t \circ \mu_t \circ (\varphi_t^{-1} \otimes \varphi_t^{-1}), ~ \overline{R}_t = \varphi_t \circ R_t \circ \varphi_t^{-1} \big).
\end{align*}
Then $( \overline{\mu}_t, \overline{R}_t)$ is a formal one-parameter deformation equivalent to $(\mu_t, R_t)$. Moreover, it follows that $\overline{\mu}_t$ and $\overline{R}_t$ are of the following forms
\begin{align*}
\overline{\mu}_t = \mu + \overline{\mu}_2 t^2 + \cdots ~~~~ \text{ and } ~~~~ \overline{R}_t = R + \overline{R}_2 t^2 + \cdots .
\end{align*}
By repeating this argument, one can conclude that $(\mu_t, R_t)$ is equivalent to $(\mu, R)$. This completes the proof.
\end{proof}

\section{Abelian extensions of modified Rota-Baxter algebras}\label{sec-5}
In this section, we define and study abelian extensions of a modified Rota-Baxter algebra $(A,R)$ by a bimodule $(M,S)$. We show that the equivalence classes of such abelian extensions are classified by the second cohomology group $H^2_{\mathrm{mRBA}} ( (A,R), (M,S))$.

\medskip

Let $(A, R)$ be a modified Rota-Baxter algebra of weight $\kappa$. Let $(M,S)$ be a pair consisting of a vector space $M$ and a linear map $S: M \rightarrow M$. Note that the pair $(M,S)$ can be regarded as a modified Rota-Baxter algebra of weight $\kappa$, where we equip $M$ with the trivial associative multiplication.

\begin{definition}
(i) An {\bf abelian extension} of the modified Rota-Baxter algebra $(A, R)$ of weight $\kappa$ by a pair $(M,S)$ is a short exact sequence of modified Rota-Baxter algebras
\begin{align}\label{diag}
\xymatrix{
0 \ar[r] & (M,S) \ar[r]^{i} & (E,U) \ar[r]^{\pi} & (A,R) \ar[r] & 0.
}
\end{align}
We often denote an abelian extension as above simply by $(E, U)$ when the structure maps $i$ and $\pi$ are clear from the context.

(ii) Two abelian extensions $(E,U)$ and $(E', {U'})$ of the modified Rota-Baxter algebra $(A, R)$ by a pair $(M,S)$ are said to be {\bf isomorphic} if there exists an isomorphism $\varphi : (E,U) \rightarrow (E', U')$ of modified Rota-Baxter algebras making the following diagram commutative
\begin{align}\label{diag2}
\xymatrix{
0 \ar[r] & (M,S) \ar[r]^{i} \ar@{=}[d] & (E,U) \ar[d]^{\varphi} \ar[r]^{\pi}  & (A,R) \ar[r] \ar@{=}[d] & 0 \\
0 \ar[r] & (M,S) \ar[r]_{i'} & (E,U) \ar[r]_{\pi'} & (A,R) \ar[r] & 0.
}
\end{align}
\end{definition}

Let $(E, U)$ be an abelian extension as of (\ref{diag}). A section of the map $\pi$ is a linear map $s : A \rightarrow E$ that satisfies $ \pi s = \mathrm{id}_A$. There is always a section of the map $\pi$. For any section $s$, we define two bilinear maps $A \times M \rightarrow M$, $(a, u) \mapsto s(a) \cdot_E i(u)$ and $M \times A \rightarrow M$, $(u,a) \mapsto i(u) \cdot_E s(a)$, for $a \in A$, $u \in M$. Here $~\cdot_E~$ denotes the associative multiplication on $E$. With these bilinear maps, it is easy to see that $M$ is an $A$-bimodule (see \cite{loday-cyclic}). More generally, the pair $(M,S)$ is a bimodule over the modified Rota-Baxter algebra $(A, R)$ of weight $\kappa$. This bimodule is called the induced bimodule over $(A,R)$.

Let $(M,S)$ be a given bimodule over a modified Rota-Baxter algebra $(A,R)$ of weight $\kappa$. We denote $\mathrm{Ext}((A,R), (M,S))$ by the set of all isomorphism classes of abelian extensions of $(A, R)$ by the pair $(M,S)$ for which the induced bimodule on $(M,S)$ is the given one.

\begin{theorem}
Let $(A, R)$ be a modified Rota-Baxter algebra of weight $\kappa$, and $(M,S)$ be a bimodule over it. Then there is a one-to-one correspondence between $\mathrm{Ext}((A,R), (M,S))$ and the second cohomology group $H^2_\mathrm{mRBA} ((A,R), (M,S))$.
\end{theorem}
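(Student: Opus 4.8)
The plan is to construct mutually inverse maps between $\mathrm{Ext}((A,R),(M,S))$ and $H^2_\mathrm{mRBA}((A,R),(M,S))$ in the standard way for extension-classification theorems, using the explicit description of $2$-cocycles recorded just before the statement. First I would fix an abelian extension as in \eqref{diag} together with a section $s:A\to E$ of $\pi$, which exists by linearity. Transporting structure along $s$ and $i$, I would define $\chi\in\mathrm{Hom}(A^{\otimes 2},M)$ by $i(\chi(a,b))=s(a)\cdot_E s(b)-s(a\cdot b)$ (this measures the failure of $s$ to be an algebra map, and lies in $i(M)$ because $\pi$ is multiplicative) and $\Phi\in\mathrm{Hom}(A,M)$ by $i(\Phi(a))=U(s(a))-s(R(a))$ (which lies in $i(M)$ because $\pi U=R\pi$). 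The associativity of $\cdot_E$ then yields the Hochschild cocycle condition $\delta_\mathrm{Hoch}(\chi)=0$, and the fact that $U$ is a modified Rota-Baxter operator of weight $\kappa$ on $E$, expanded via $U(i(u))=i(S(u))$ and the definitions of $\chi,\Phi$, yields precisely the second displayed cocycle equation before the theorem, namely
\[
R(a)\cdot\Phi(b)-S(a\cdot\Phi(b))-\Phi(R(a)\cdot b+a\cdot R(b))+\Phi(a)\cdot R(b)-S(\Phi(a)\cdot b)+\chi(R(a),R(b))-\kappa\,\chi(a,b)-R(\chi(R(a),b)+\chi(a,R(b)))=0.
\]
Hence $(\chi,\Phi)\in Z^2_\mathrm{mRBA}((A,R),(M,S))$.

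Next I would check that the cohomology class $[(\chi,\Phi)]$ is independent of the choice of section: two sections $s,s'$ differ by a linear map $\varphi_1:A\to M$ via $s'(a)=s(a)+i(\varphi_1(a))$, and a direct computation shows the resulting cocycles differ by $\delta_\mathrm{mRBA}(\varphi_1,0)=(\delta_\mathrm{Hoch}(\varphi_1),-\Psi^1(\varphi_1))$. I would then verify that isomorphic extensions give the same class: an isomorphism $\varphi$ as in \eqref{diag2} carries a section of $\pi$ to a section of $\pi'$ (after composing with $\varphi$) and intertwines the two sets of structure maps, so the cocycles literally coincide for compatible choices of sections. This defines a well-defined map $\mathrm{Ext}((A,R),(M,S))\to H^2_\mathrm{mRBA}((A,R),(M,S))$.

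For the inverse, given a $2$-cocycle $(\chi,\Phi)$ I would set $E=A\oplus M$ as a vector space, define $(a,u)\cdot_E(b,v)=(a\cdot b,\;a\cdot v+u\cdot b+\chi(a,b))$ and $U(a,u)=(R(a),\;S(u)+\Phi(a))$, with the obvious $i$ and $\pi$. The Hochschild cocycle identity for $\chi$ makes $\cdot_E$ associative; the second cocycle identity, together with the bimodule axioms for $(M,S)$ over $(A,R)$, makes $U$ a modified Rota-Baxter operator of weight $\kappa$ on $E$ — this is the computational heart of the argument and mirrors the semidirect product computation of Proposition \ref{semi-mrba} with the cocycle terms added. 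One checks the induced bimodule structure on $(M,S)$ is the given one, so this yields an element of $\mathrm{Ext}((A,R),(M,S))$, and that cohomologous cocycles give isomorphic extensions (the isomorphism being $(a,u)\mapsto(a,u+\varphi_1(a))$ when the cocycles differ by $\delta_\mathrm{mRBA}(\varphi_1,0)$, noting the shift by a coboundary $-\widetilde\delta_\mathrm{Hoch}(u)$ corresponds to changing $U$ by an inner term). Finally I would verify the two constructions are mutually inverse: starting from an extension, the section $s(a)=(a,0)$ recovers $(\chi,\Phi)$ up to the coboundary ambiguity already accounted for; starting from a cocycle, the construction reproduces it on the nose. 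The main obstacle I expect is the verification that $U$ on $E$ satisfies the modified Rota-Baxter identity of weight $\kappa$: this requires carefully expanding $U(a,u)\cdot_E U(b,v)$ and $U\big(U(a,u)\cdot_E(b,v)+(a,u)\cdot_E U(b,v)\big)+\kappa\,(a,u)\cdot_E(b,v)$ and matching the $M$-components, where exactly the second summand of the cocycle condition is needed; everything else is routine bookkeeping.
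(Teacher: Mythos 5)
Your proposal is correct and follows essentially the same route as the paper: extracting the pair $(\chi,\Phi)$ from a section via $s(a)\cdot_E s(b)-s(a\cdot b)$ and $Us(a)-sR(a)$, building the extension $A\oplus M$ with multiplication twisted by $\chi$ and operator $(a,u)\mapsto(R(a),S(u)+\Phi(a))$, and checking well-definedness and mutual inverseness exactly as in the paper. Your explicit treatment of section-independence and of reducing a general coboundary $\delta_\mathrm{mRBA}(\varphi_1,u)$ to one of the form $\delta_\mathrm{mRBA}(\varphi_1+\delta_\mathrm{Hoch}(u),0)$ is a welcome elaboration of steps the paper leaves to the reader, but it does not change the argument.
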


\begin{proof}
Let $(\chi, \Phi) \in Z^2_\mathrm{mRBA} ((A, R), (M,S))$ be a $2$-cocycle that represents an element in $H^2_\mathrm{mRBA} ((A, R), (M,S))$. Since $(\chi, \Phi)$ is a $2$-cocycle, we have $\delta_\mathrm{Hoch} (\chi) = 0$ and $\widetilde{\delta}_\mathrm{Hoch} (\Phi) + \Psi^2 (\chi) = 0$. Consider the direct sum $A \oplus M$ with the multiplication 
\begin{align*}
(a, u) \cdot_\ltimes (b, v) = (a \cdot b, ~ a \cdot v + u \cdot a + \chi (a, b)), \text{ for } (a, u), (b, v) \in A \oplus M.
\end{align*}
Since $\chi$ is a Hochschild $2$-cocycle, it follows that the above multiplication is associative. Thus it makes the space $A \oplus M$ into an associative algebra, denoted by $A \oplus_\chi M$. Further, we define a map $R_\Phi : A \oplus_\chi M \rightarrow A \oplus_\chi M$ by
\begin{align*}
R_\Phi (a,u) = \big(  R(a), S (u) + \Phi (a) \big), \text{ for } (a, u) \in A \oplus M.
\end{align*}
Since $\widetilde{\delta}_\mathrm{Hoch} (\Phi) + \Psi^2 (\chi) = 0$, it follows that $R_\Phi$ is a modified Rota-Baxter operator of weight $\kappa$ on the algebra $A \oplus_\chi M$. In other words, $(A \oplus_\chi M, R_\Phi)$ is a modified Rota-Baxter algebra of weight $\kappa$. Then it is easy to see that
\begin{align*}
\xymatrix{
0 \ar[r] & (M,S) \ar[r]^{i} & (A \oplus_\chi M, R_\Phi) \ar[r]^{\pi} & (A,R) \ar[r] & 0
}
\end{align*}
is an abelian extension of $(A, R)$ by $(M,S)$, where the structure maps $i$ and $\pi$ are respectively given by $i (u) = (0, u)$ and $\pi (a, u) = a$, for $u \in M$, $(a, u) \in A \oplus M$. Note that the map $s : A \rightarrow A \oplus M$, $s(a) = (a, 0)$ is a section of the map $\pi$, and the section $s$ induces the given bimodule structure on $(M,S).$ 

\medskip

Let $(\chi', \Phi') \in Z^2_\mathrm{mRBA} ((A, R), (M,S))$ be another $2$-cocycle cohomologous to $(\chi, \Phi)$. In other words, they corresponds to the same element in $H^2_\mathrm{mRBA} ((A,R), (M,S))$. Then there exists a map $\theta \in \mathrm{Hom} (A, M)$ such that
\begin{align*}
(\chi, \Phi) - (\chi', \Phi') = \delta_\mathrm{mRBA} (\theta, 0).
\end{align*}
We now define a map $\varphi : A \oplus M \rightarrow A \oplus M$ by
\begin{align*}
\varphi (a, u) = (a, u - \theta (a)).
\end{align*}
It is then easy to verify that $\varphi : (A \oplus_\chi M, R_\Phi) \rightarrow (A \oplus_{\chi'} M, R_{\Phi'})$ is an isomorphism of modified Rota-Baxter algebras, which makes the abelian extensions $(A \oplus_\chi M, R_\Phi) $ and $(A \oplus_{\chi'} M, R_{\Phi'})$ isomorphic. Therefore, there exists a well-defined map $\Upsilon : H^2_\mathrm{mRBA} ((A, R), (M,S)) \rightarrow \mathrm{Ext}((A, R), (M, S))$.

\medskip

Conversely, let $(E, U)$ be an abelian extension of $(A, R)$ by $(M,S)$ so that the induced bimodule structure on $(M,S)$ coincides with the given one. Let $s : A \rightarrow E$ be any section of the map $\pi$. We define maps $\chi^s \in \mathrm{Hom}(A^{\otimes 2}, M)$ and $\Phi^s \in \mathrm{Hom}(A, M)$ by
\begin{align*}
\chi^s (a, b) = s(a) \cdot_E s(b) - s (a \cdot b) ~~~ \text{ and } ~~~ \Phi^s (a) = Us (a) - s R(a), \text{ for } a, b \in A.
\end{align*}
Then straightforward calculations show that $\chi^s$ is a Hochschild $2$-cocycle (i.e. $\delta_\mathrm{Hoch} (\psi^s) = 0$) and
\begin{align*}
\widetilde{\delta}_\mathrm{Hoch} (\Phi^s) + \Psi^2 (\chi^s)  = 0.
\end{align*}
Thus, we have $\delta_\mathrm{mRBA} (\chi^s, \Phi^s) = \big( \delta_\mathrm{Hoch} (\chi^s), ~ - \widetilde{\delta}_\mathrm{Hoch} (\Phi^s) - \Psi^2 (\chi^s)  \big)  = 0$, which shows that $(\chi^s, \Phi^s)$ is a $2$-cocycle. It is easy to see that the corresponding class in $H^2_\mathrm{mRBA} ((A, R), (M,S))$ doesn't depend on the choice of the section $s$.

\medskip

Let $(E,U)$ and $(E', U')$ be two isomorphic abelian extensions of as (\ref{diag2}). If $s: A \rightarrow E$ is a section of the map $\pi$, then the map $s' = \varphi s : A \rightarrow E'$ is a section of the map $\pi'$. Let $(\chi^{s'}, \Phi^{s'})$ be the $2$-cocycle corresponding to the abelian extension $(E', {U'})$ and the section $s'$ of the map $\pi'$. Then we have
\begin{align*}
\chi^{s'} (a, b) =~& s' (a) \cdot_{E'} s' (b) - s' (a \cdot b) \\
=~& (\varphi s)(a) \cdot_{E'} (\varphi s) (b) - (\varphi s)(a \cdot b) \\
=~& \varphi \big(  s(a) \cdot_E s(b) - s(a \cdot b) \big) = \varphi (\chi^s (a, b) ) = \chi^s (a, b)
\end{align*}
and
\begin{align*}
\Phi^{s'} (a) =~& Us' (a) - s' R(a) \\
=~& U \varphi s (a) - \varphi s R (a) \\
=~& \varphi Us (a) - \varphi s R(a) = \varphi (\Phi^s (a)) = \Phi^s (a).
\end{align*}
Hence $(\chi^{s'}, \Phi^{s'}) = (\chi^s, \Phi^s)$. Therefore, they corresponds to the same element in $H^2_\mathrm{mRBA} ((A, R), (M,S))$. As a summary, we obtain a well-defined map $\Omega : \mathrm{Ext}((A, R), (M,S)) \rightarrow H^2_\mathrm{mRBA} ((A, R), (M,S))$. Finally, the maps $\Upsilon$ and $\Omega$ are inverses to each other. This completes the proof.
\end{proof}

\medskip

We end this paper by suggesting some questions of further interest. We will address some of these questions in future.

We have seen that Rota-Baxter algebras are closely related to modified Rota-Baxter algebras of negative weights. In \cite{DasSK,wang-zhou} the authors constructs an $L_\infty$-algebra whose Maurer-Cartan elements are precisely Rota-Baxter algebra structures on a vector space $A$. One requires a suitable linear transformation of this $L_\infty$-algebra to obtain a new one whose Maurer-Cartan elements are precisely modified Rota-Baxter algebras of fixed negative weight. In a subsequent work, we aim to provide a systematic construction of the $L_\infty$-algebra that characterize modified Rota-Baxter algebras of any weight as Maurer-Cartan elements.

Given an algebraic structure, it is an interesting question to find the corresponding homotopy version. Such homotopy version is better understood in terms of the minimal model of the operad governing such structure. The minimal model of the operad for Rota-Baxter algebras was considered in \cite{wang-zhou}. It could be interesting to find the similar theory for modified Rota-Baxter algebras.

In \cite{modified-sheng} the authors considered modified Rota-Baxter operators of weight $-1$ on Lie algebras and studied their cohomology and deformation theory. One can easily generalize this to modified Rota-Baxter operators of arbitrary weight. Let $\mathfrak{g} = (\mathfrak{g}, [~,~])$ be a Lie algebra. A linear map $R : \mathfrak{g} \rightarrow \mathfrak{g}$ is a modified Rota-Baxter operator of weight $\kappa$ if 
\begin{align*}
[R(x), R(y) ] = R \big( [R(x), y] + [x, R(y)]  \big) + \kappa ~ \! [x, y], \text{ for } x, y \in \mathfrak{g}.
\end{align*}
It follows that if $(A, R)$ is a modified Rota-Baxter algebra of weight $\kappa$ then $R$ is a modified Rota-Baxter operator of the same weight on the commutator Lie algebra structure on $A$. We define a modified Rota-Baxter Lie algebra of weight $\kappa$ as a pair $(\mathfrak{g}, R)$ in which $\mathfrak{g}$ is a Lie algebra and $R: \mathfrak{g} \rightarrow \mathfrak{g}$ is a modified Rota-Baxter operator of weight $\kappa$. By following the results of the present paper, one could also develop the cohomology and deformation theory of modified Rota-Baxter Lie algebras of any weight.

In the study of integrations of Rota-Baxter Lie algebras, the authors in \cite{RBG} introduced Rota-Baxter operators of weight $1$ on abstract groups. Let $(G, \diamond)$ be a group. A set map $B : G \rightarrow G$ is a Rota-Baxter operator of weight $1$ if  
\begin{align*}
B(x) \diamond B(y) = B \big( x \diamond B(x) \diamond y \diamond B(x)^{-1} \big), \text{ for } x, y \in \mathfrak{g}.
\end{align*} 
An interesting problem is to find the suitable notion of modified Rota-Baxter operator of weight $-1$ on abstract groups. With this notion, one could expect integrations of modified Rota-Baxter Lie algebras of weight $-1$ by modified Rota-Baxter Lie groups of weight $-1$.

\medskip

\medskip

\noindent {\bf Acknowledgements.} The author would like to thank IIT Kharagpur (India) for providing a beautiful academic atmosphere where his research has been carried out.


\begin{thebibliography}{BFGM03}



\bibitem{aguiar} M. Aguiar, Pre-Poisson algebras, {\em Lett. Math. Phys.} 54 (2000) 263-277.



\bibitem{aguiar-pre-lie} M. Aguiar, Infinitesimal bialgebras, pre-Lie and dendriform algebras, In {\em Hopf algebras}, 1-33, Lecture Notes in Pure and Applied Mathematics 237. New York: Dekker, 2004.




\bibitem{atkinson} Atkinson, F.V.: Some aspects of Baxter's functional equation. {J. Math. Anal. Appl.} 7, 1--30 (1963).



\bibitem{bai-lax} C. Bai, L. Guo and X. Ni, Non-abelian generalized Lax pairs, the classical Yang-Baxter equation and
Post-Lie algebras, {\em Commun. Math. Phys.} 297 (2010), 553-596. 

\bibitem{bai} C. Bai, O. Bellier and L. Guo, Splitting of operations, Manin products, and Rota-Baxter operators, {\em International Mathematics Research Notices} 2013 (3) (2013) 485--524.


\bibitem{balav} D. Balavoine, D\'{e}formations des alg\'{e}bres de Leibniz (in French), {\em C. R. Acad. Sci. Paris S\'{e}r. I Math.} 319 (1994), no. 8, 783-788.

\bibitem{bala} D. Balavoine, Deformations of algebras over a quadratic operad, {Operads: Proceedings of Renaissance Conferences (Hartford, CT/Luminy, 1995),} 207--234, Contemp. Math., 202, { Amer. Math. Soc., Providence, RI,} 1997.


\bibitem{baxter} G. Baxter, An analytic problem whose solution follows from a simple algebraic identity, {\em Pacific J. Math.} 10  (1960) 731--742.


\bibitem{bor} M. Bordemann, Generalized Lax pairs, the modified classical Yang–Baxter equation, and affine geometry of Lie groups, {\em Commun. Math. Phys.} 135 (1990), 201–216. 



\bibitem{cartier} P. Cartier, On the structure of free Baxter algebras. Adv. Math. 9, 253--265 (1972).






\bibitem{connes}
A. Connes and D. Kreimer, Renormalization in quantum field theory and the Riemann-Hilbert problem. I. The Hopf algebra structure of graphs and the main theorem,
{\em Comm. Math. Phys.} 210 (1)   (2000) 249--273.


\bibitem{das-rota} A. Das, Deformations of associative Rota-Baxter operators, {\em J. Algebra} 560 (2020) 144--180.


\bibitem{DasSK} A. Das and S. K. Mishra, The $L_{\infty}$-deformations of associative Rota-Baxter algebras and homotopy Rota-Baxter operators, {\em J. Math. Phys.} Vol. 63, Issue 5 (2022) 051703.














\bibitem{doubek} M. Doubek, M. Markl and P. Zima, Deformation theory (lecture notes), {\em Arch. Math. (Brno)} 43 (5) (2007) 331-371.










\bibitem{gers-ring} M. Gerstenhaber, The cohomology structure of an associative ring, {\em Ann. of Math.} (2) 78 (1963), 267-288.

\bibitem{gers} M. Gerstenhaber, On the deformation of rings and algebras, {\em Ann. of Math. (2)} 79 (1964), 59--103.



\bibitem{gers-sch} M. Gerstenhaber and S. D. Schack, On the deformation of algebra morphisms and diagrams, {\em Trans. Amer. Math. Soc.} 279 (1983), no. 1, 1-50.




\bibitem{guo-book} L. Guo, An introduction to Rota-Baxter algebra. Surveys of Modern Mathematics, 4. { International Press, Somerville, MA; Higher Education Press, Beijing} (2012).

\bibitem{guo-keigher} L. Guo and W. Keigher, Baxter algebras and Shuffle products, {\em Adv. Math.} 150 (2000) 117-149.

\bibitem{GuoLin} L. Guo and Z. Lin, Representations and modules of Rota-Baxter algebras, Preprint, arXiv:1905.01531


\bibitem{RBG} L. Guo, H. Lang and Y. Sheng, Integration and geometrization of Rota-Baxter Lie algebras, {\em Adv. Math.} 387 (2021) 107834.


\bibitem{hoch} G. Hochschild, On the cohomology groups of an associative algebra, {\em Ann. of Math. (2)} 46 (1945), 58-67.





\bibitem{jiang-sheng} J. Jiang and Y. Sheng, Representations and cohomologies of relative Rota-Baxter Lie algebras and applications, {\em J. Algebra} 602 (2022), 637-670.

\bibitem{modified-sheng} J. Jiang and Y. Sheng, Cohomologies and deformations of modified $r$-matrices, Preprint, arXiv:2206.00411








\bibitem{laza-rota} A. Lazarev, Y. Sheng and R. Tang, Deformations and homotopy theory of relative Rota-Baxter Lie algebras, {\em Comm. Math. Phys.} 385, 595--631 (2021).

\bibitem{li} L. C. Li, Classical $r$-matrices and compatible Poisson structures for Lax equations on Poisson algebras, {\em Comm. Math. Phys.} 203 (1999), no. 3, 573-592.



\bibitem{loday-cyclic} J.-L. Loday, Cyclic Homology. Springer-Verlag, Grundlehren der mathematischen Wissenschaften, 301 (1992).







\bibitem{nij-ric} A. Nijenhuis and Richardson, Cohomology and deformations in graded Lie algebras, {Bull. Amer. Math. Soc.} 72 (1966) 1--29.







\bibitem{rota} G.-C. Rota, Baxter algebras and combinatorial identities I, II. {\em Bull. Amer. Math. Soc.} 75, 325--329 (1969); ibid 75, 330--334 (1969).

\bibitem{semenov} M. A. Semenov-Tyan-Shanski\u{i}, What a classical $r$-matrix is. {\em Functional Anal. Appl.} 17 (1983), no. 4, 259-272.





\bibitem{sza} B. Szablikowski, Classical $r$-matrix like approach to Frobenius manifolds, WDVV equations and flat metrics,
{\em J. Phys. A} 48 (2015), no. 31, 315203, 47 pp.

\bibitem{tang} R. Tang,  C. Bai, L. Guo and Y. Sheng, Deformations and their controlling cohomologies of $\mathcal{O}$-operators, {\em Comm. Math. Phys.} 368 (2) (2019)  665--700.






\bibitem{wang-zhou} K. Wang and G. Zhou, Deformations and homotopy theory of Rota-Baxter algebras of any weight, Preprint, {arXiv:2108.06744}.





\bibitem{zhang} X. Zhang, X. Gao and L. Guo, Modified Rota-Baxter Algebras, Shuffle Products and Hopf Algebras, {\em Bull. Malays. Math. Sci. Soc.} 42 (2019), 3047-3072.

\bibitem{zhang2} X. Zhang, X. Gao and L. Guo, Free modified Rota-Baxter algebras and Hopf algebras, {\em Int. Electron. J. Algebra} 25 (2019), 12-34.

\bibitem{zhang3} Z. Zhu, H. Zhang and X. Gao, Free weighted (modified) differential algebras, free (modified) Rota-Baxter algebras and Gröbner-Shirshov bases, Preprint, arXiv:2108.03563.

\end{thebibliography}
\end{document}